\begin{document}


\title[On Minimal Sums of Tensor Product of Sequences]{On Minimal Sums of Tensor Product of Sequences in Separable Hilbert Spaces}


\author{Abdelkrim Bourouihiya}
\address{Department of Mathematics, Nova Southeastern University}
\curraddr{3301 College Avenue, Fort Lauderdale, Florida, USA}
\email{ab1221@nova.edu}

\author{Samir Kabbaj}
\address{Department of Mathematics, University of Ibn Tofail}
\curraddr{Kenitra, Morocco}
\email{samkabbaj@yahoo.fr>}






\keywords{Hilbert spaces, Bessel Sequences, Frames, Tensor products}


\subjclass{ Primary 42C15; Secondary 47A80}{}



\newtheorem{thm}{Theorem}[section]
\newtheorem{cor}[thm]{Corollary}
\newtheorem{lem}[thm]{Lemma}
\newtheorem{prop}[thm]{Proposition}
\newtheorem{ax}{Axiom}

\theoremstyle{definition}
\newtheorem{defn}[thm]{Definition}
\newtheorem{examp}[thm]{Example}
\newtheorem{rem}[thm]{Remark}
\newtheorem{conj}[thm]{Conjecture}
\newtheorem*{notation}{Notation}

\newtheorem{exercise}[thm]{Exercise}
\newtheorem{conclusion}[thm]{Conclusion}
\newtheorem{criterion}[thm]{Criterion}
\newtheorem{summary}[thm]{Summary}
\newtheorem{problem}[thm]{Problem}
\numberwithin{equation}{section}

\begin{abstract}
It is known that the tensor product of two sequences, in the tensor product of two separable Hilbert spaces, is a frame  if and only if each component of that product is a frame. This paper proposes a sort of generalization of the aforementioned result by dealing with sequences S that are finite minimal sums of tensor products of a finite number of sequences. We prove that S is a Bessel sequence if and only if it is a sum for which each term is the tensor product of Bessel sequences. We also state necessary conditions for S to be a frame. For dimensions d greater than one, we deduce several  results on Gabor systems generated by finite rank square integrable functions.  Surprisingly, the one dimensional versions of some of these results are extremely difficult to prove or disapprove. 
\end{abstract}

\maketitle

\section{Introduction}
For this paper, all Hilbert spaces are assumed to be separable and denoted by $\mathcal{H}$ or $\mathcal{K}$.  The space of bounded operators: $\mathcal{H}\rightarrow \mathcal{K}$ is denoted $\mathcal{B}(\mathcal{H}, \mathcal{K})$.

\begin{defn}\label{frame}
Let $\mathcal{F} = \{f_n\}_{n >0}$ be a sequence in $\mathcal{H}$.
\begin{enumerate}[(i)]
  \item $\mathcal{F}$ is a \emph{Bessel sequence} in $\mathcal{H}$ if there exists a constant $B > 0$, called a Bessel bound, such that
  \begin{eqnarray*}
  \forall f \in \mathcal{H}, \quad  \sum_{n>0} |\langle f, f_n \rangle |^2 \leq B \|f \|^2.
  \end{eqnarray*}
  \item $\mathcal{F}$ is a \emph{frame} for $\mathcal{H}$ if there exist constants $A,B > 0$, called frame bounds, such that
  \begin{eqnarray*}
  \forall f \in \mathcal{H}, \quad  A \|f \|^2 \leq \sum_{n>0} |\langle f, f_n \rangle |^2 \leq B \|f \|^2.
  \end{eqnarray*}
  \item $\mathcal{F}$ is a \emph{Riesz basis} if  $\mathcal{F}$ is a frame that is the image of an Orthonormal Normal  Basis (ONB) under a continuous and  invertible linear map of  $\mathcal{H}$ onto itself.
\end{enumerate}
\end{defn}

A \emph{translation} of $g \in L^2(\mathbb{R}^{d})$ by $a \in \mathbb{R}^d$ is $T_a g(x) = g(x-a)$, a \emph{modulation} of $g$ by $b \in \mathbb{R}^d$ is $M_b g(x) = e^{2\pi i\langle x,b \rangle} g(x)$, and a \emph{dilation} of $g$ by $\lambda \in (0,\infty)^d$ is $D_\lambda g(x) = \prod_{j=1}^{d} \sqrt{\lambda_j} g( x_1/\lambda_1, \dots, x_d/ \lambda_d) $. We denote by $\mathcal{M}$, the group  of \emph{metaplectic transformations} generated by all transformations $T_a$, $M_b$, and $D_\lambda$.  A Gabor system generated by $g$ and $\Lambda \subset \mathbb{R}^d \times \mathbb{R}^d $ is $\mathcal{G}(g, \Lambda)=\{M_{b}T_{a} g: (a, b) \in \Lambda\}$. If $a_1,b_1 \in [0,\infty)$, $\mathcal{G}(g,  \mathbb{Z}^da_1\times \mathbb{Z}^db_1)$ is \emph{a rectangular} Gabor system that will be  denoted by $\mathcal{G}(g,a_1, b_1)$. If $a,b \in [0,\infty)^d$, $\mathcal{G}(g,  \mathbb{Z}a\times \mathbb{Z}b)$ is \emph{a general rectangular} Gabor system that will be  denoted by $\mathcal{G}(g,a, b)$.

One of the  most important problems in frame theory is to build examples of Bessel sequences, frames, and Riesz bases. Related to this problem is to find simple criteria that single out, in a class of sequences, Bessel sequences, frames, or Riesz bases. Several results, mostly known as density theorems, were obtained for the class of Gabor systems in $L^2(\mathbb{R})$ \cite{Hei1,Jan1,Jan2,Lyu,Sei}. Some of theses results were extended to higher dimensions using the following theorem \cite{Bou1}.
\begin{thm}\label{frame2}
The tensor product of $d$ sequences is a frame (Riesz basis)  if and only if each component of this product is a frame (Riesz basis).
\end{thm}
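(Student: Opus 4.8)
The plan is to reduce to the case $d=2$ and then route the argument through the analysis and frame operators. Since the tensor product is associative, the identification $\mathcal{H}_1\otimes\cdots\otimes\mathcal{H}_d=(\mathcal{H}_1\otimes\cdots\otimes\mathcal{H}_{d-1})\otimes\mathcal{H}_d$ matches the tensor product of $d$ sequences with the tensor product of two sequences, so an induction on $d$ reduces everything to two factors $\mathcal{F}=\{f_n\}$ in $\mathcal{H}$ and $\mathcal{G}=\{g_m\}$ in $\mathcal{K}$. The computational heart is the factorization $\langle f\otimes g,\,f_n\otimes g_m\rangle=\langle f,f_n\rangle\langle g,g_m\rangle$ on elementary tensors, which gives
\[
\sum_{n,m}\bigl|\langle f\otimes g,\,f_n\otimes g_m\rangle\bigr|^2=\Bigl(\sum_n|\langle f,f_n\rangle|^2\Bigr)\Bigl(\sum_m|\langle g,g_m\rangle|^2\Bigr).
\]
I would phrase the rest through the analysis operators $C_\mathcal{F}f=(\langle f,f_n\rangle)_n$, $C_\mathcal{G}g=(\langle g,g_m\rangle)_m$ and the frame operators $S_\mathcal{F}=C_\mathcal{F}^*C_\mathcal{F}$, $S_\mathcal{G}=C_\mathcal{G}^*C_\mathcal{G}$, using $\ell^2\otimes\ell^2\cong\ell^2(\mathbb{N}\times\mathbb{N})$. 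The key observation is that the analysis operator of the product sequence agrees with $C_\mathcal{F}\otimes C_\mathcal{G}$ on elementary tensors, hence equals it whenever the latter is bounded, so the frame operator of the product sequence is $S_\mathcal{F}\otimes S_\mathcal{G}$.

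For the ``if'' direction, assume both components are frames. Then $C_\mathcal{F},C_\mathcal{G}$ are bounded, so $C_\mathcal{F}\otimes C_\mathcal{G}$ is bounded with $\|C_\mathcal{F}\otimes C_\mathcal{G}\|=\|C_\mathcal{F}\|\,\|C_\mathcal{G}\|$; this already shows the product is Bessel with bound $B_1B_2$. For the lower bound I would use that $S_\mathcal{F}\ge A_1 I$ and $S_\mathcal{G}\ge A_2 I$ force $S_\mathcal{F}\otimes S_\mathcal{G}\ge A_1A_2\,I\otimes I$: writing $S_\mathcal{F}=A_1 I+P$ and $S_\mathcal{G}=A_2 I+Q$ with $P,Q\ge 0$ and expanding, each cross term is a tensor product of positive operators and hence positive (because $P\otimes Q=(R\otimes T)^*(R\otimes T)$ when $P=R^*R$, $Q=T^*T$). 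This yields the frame property with bounds $A_1A_2$ and $B_1B_2$.

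For the ``only if'' direction, assume the product is a frame, so it is Bessel with bound $B$ and has lower bound $A$. Restricting the two-sided inequality to elementary tensors and choosing $f_0=f_{n_0}\neq 0$ and $g_0=g_{m_0}\neq 0$ (which exist unless $\mathcal{H}\otimes\mathcal{K}=\{0\}$), the Bessel bound gives $\|f_{n_0}\|^4\cdot S_{g_0}\le B\|f_{n_0}\|^2\|g_0\|^2$ where $S_{g_0}=\sum_m|\langle g_0,g_m\rangle|^2$, so $0<\|g_0\|^4\le S_{g_0}<\infty$. Dividing the elementary-tensor inequality by the fixed finite positive constant $S_{g_0}$ yields both frame bounds for $\mathcal{F}$, and symmetry handles $\mathcal{G}$. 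For the Riesz basis statement, if $\mathcal{F}=U(e_n)$ and $\mathcal{G}=V(\epsilon_m)$ for ONBs $\{e_n\},\{\epsilon_m\}$ and bounded invertible $U,V$, then $\{e_n\otimes\epsilon_m\}$ is an ONB of $\mathcal{H}\otimes\mathcal{K}$ and $f_n\otimes g_m=(U\otimes V)(e_n\otimes\epsilon_m)$, with $U\otimes V$ bounded and invertible with inverse $U^{-1}\otimes V^{-1}$, so the product is a Riesz basis. Conversely, a Riesz basis is a frame, so the frame case gives that each component is a frame; to upgrade I would check that no nontrivial $(c_n)\in\ell^2$ satisfies $\sum_n c_n f_n=0$ (the no-redundancy property that, with the frame property, characterizes Riesz bases), since such a relation would give $\sum_n c_n(f_n\otimes g_{m_0})=0$ with $g_{m_0}\neq 0$, contradicting the same property for $\{f_n\otimes g_m\}$.

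The genuinely nontrivial point is not the factorization identity, which is immediate on elementary tensors, but the passage from elementary tensors to arbitrary elements of $\mathcal{H}\otimes\mathcal{K}$: the frame inequality must hold for every vector, including non-elementary ones. This is exactly what the operator-theoretic facts supply, namely that tensor products of bounded, of positive, and of boundedly invertible operators remain respectively bounded, positive, and boundedly invertible. I expect the boundedness of $C_\mathcal{F}\otimes C_\mathcal{G}$ and, above all, the lower estimate for $S_\mathcal{F}\otimes S_\mathcal{G}$ to require the most care, whereas the ``only if'' direction needs only the elementary-tensor inequalities together with the non-degeneracy remarks above.
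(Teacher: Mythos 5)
Your proof is correct, but a comparison with ``the paper's proof'' has to start with a point of order: the paper never proves Theorem \ref{frame2}; it quotes it from \cite{Bou1} and builds on it. The relevant comparison is therefore with the operator-theoretic framework the paper sets up for its generalizations, namely Proposition \ref{sequences} (a sequence is Bessel iff its associated operator $F$ is bounded, a frame iff $F$ is bounded and left-invertible, a Riesz basis iff $F$ is bounded and invertible) combined with tensor products of operators: in that framework the theorem reduces to showing that $F_1\otimes F_2$ is bounded, left-invertible, or invertible iff both factors are, with the ``if'' directions coming from tensoring the (left) inverses, $(L_1\otimes L_2)(F_1\otimes F_2)=I\otimes I$, and the ``only if'' directions from restriction to elementary tensors. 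Your route replaces the left-invertibility mechanism by a positivity argument: you identify the analysis operator of the product sequence with $C_\mathcal{F}\otimes C_\mathcal{G}$, hence its frame operator with $S_\mathcal{F}\otimes S_\mathcal{G}$, and obtain the lower bound from $S_\mathcal{F}\otimes S_\mathcal{G}\ge A_1A_2\,I$ via the expansion into positive cross terms; your converse (fixing $g_0=g_{m_0}$, extracting $0<\|g_0\|^4\le S_{g_0}<\infty$ from the Bessel bound, then dividing) is the same elementary-tensor restriction idea that the operator framework uses. What your version buys is self-containedness at the level of quadratic forms: you never need the characterization of frames by left inverses, nor the fact that left-invertibility passes through tensor products. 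What the paper's framework buys is uniformity with the rest of the paper: left-invertibility of the associated operator is precisely the property that survives the passage from single tensor products to minimal sums of them (Theorem \ref{inv} and Theorem \ref{Bessel}(ii)), which a frame-operator positivity argument does not obviously do. Two steps you should spell out in a written version: first, the identification $C=C_\mathcal{F}\otimes C_\mathcal{G}$ on all of $\mathcal{H}\otimes\mathcal{K}$ requires noting that each coordinate functional $h\mapsto\langle h,f_n\otimes g_m\rangle$ is continuous, so agreement on the dense span of elementary tensors together with boundedness of $C_\mathcal{F}\otimes C_\mathcal{G}$ forces $Ch$ to be square-summable and equal to $(C_\mathcal{F}\otimes C_\mathcal{G})h$ for every $h$; second, the characterization ``Riesz basis $=$ frame with injective synthesis operator'' that you invoke in the converse is standard but is not the paper's definition of a Riesz basis, so it needs a citation or a short proof of equivalence with Definition \ref{frame}(iii).
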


This paper proposes a sort of generalization of Theorem \ref{frame2}  by considering sequences in $\mathcal{H}= \bigotimes_{j=1}^{d} \mathcal{H}_j$ having the form
\begin{eqnarray}\label{decomp1}
f_{n_1,\dots, n_d} = \sum_{k=1}^{r} \bigotimes_{j=1}^{d} f_{j,k,n_j},
\end{eqnarray}
where, for each $j=1,\dots,d$,  $\big \{ \{f_{j,1,n}\},\dots \{f_{j,r,n}\} \big\}$ is a set of linearly independent sequences in $\mathcal{H}_j$, i.e., $ \{f_{n_1,\dots, n_d} \}$ is a minimal sum of tensor products of sequences. Our main theorem is the following.

\begin{thm}\label{Main} Let $d >1$.  Let $\mathcal{F}$ be a sequence in $\mathcal{H} $ defined by \eqref{decomp1}.
\begin{enumerate}[(i)]
\item $\mathcal{F}$ is a Bessel sequence in $\mathcal{H}$ if and only if $\{f_{j,k,n} \}$ is a Bessel sequence in $\mathcal{H}_j$  for each $(j,k) \in \{1,\dots , d\} \times \{1, \dots, r \}$.
\item If $\mathcal{F}$ is a frame in $\mathcal{H}$,  then  the concatenated sequence $\{ f_{j,k,n}\}_{1\leq k \leq r, n>0}$ is a frame in $\mathcal{H}_j$  for each $j=1,\dots, d$.
\end{enumerate}
\end{thm}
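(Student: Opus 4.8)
The plan is to prove the two implications of part (i) separately and then to bootstrap part (ii) from (i). Throughout I write $\mathbf n=(n_1,\dots,n_d)$ and $g_{k,\mathbf n}=\bigotimes_{j=1}^d f_{j,k,n_j}$, so that $f_{\mathbf n}=\sum_{k=1}^r g_{k,\mathbf n}$. The easy (``if'') direction of (i) uses no minimality: if each $\{f_{j,k,n}\}$ is Bessel with bound $B_{j,k}$, then each tensor family $\{g_{k,\mathbf n}\}_{\mathbf n}$ is Bessel with bound $\prod_j B_{j,k}$ (the Bessel analogue of Theorem~\ref{frame2}, a routine computation on elementary tensors), and a finite termwise sum of Bessel families is again Bessel by Cauchy--Schwarz, since $\sum_{\mathbf n}|\langle f,f_{\mathbf n}\rangle|^2\le r\sum_{k}\sum_{\mathbf n}|\langle f,g_{k,\mathbf n}\rangle|^2$.

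For the ``only if'' direction I would use a slicing-plus-spanning argument, where I expect the main difficulty and where minimality enters. Testing the Bessel inequality against the elementary tensor $\phi_1\otimes\psi_2\otimes\cdots\otimes\psi_d$ and retaining only the terms whose last $d-1$ indices are frozen at fixed values $m_2,\dots,m_d$ gives, after factoring the inner product, $\sum_{n_1}|\langle\phi_1,h_{n_1}\rangle|^2\le \big(B\prod_{j\ge2}\|\psi_j\|^2\big)\,\|\phi_1\|^2$, where $h_{n_1}=\sum_k\overline{\beta_k}\,f_{1,k,n_1}$ and $\beta_k=\prod_{j\ge2}\langle\psi_j,f_{j,k,m_j}\rangle$. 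Thus every sequence $\{h_{n_1}\}$ arising from a coefficient vector $\beta=(\beta_1,\dots,\beta_r)$ of this special product form is Bessel in $\mathcal H_1$. The crux is then to show that such vectors $\beta$ span all of $\mathbb C^r$: selecting a basis $\beta^{(1)},\dots,\beta^{(r)}$ among them and inverting the resulting $r\times r$ matrix exhibits each $\{f_{1,k,n_1}\}$ as a finite linear combination of Bessel sequences, hence Bessel, and the roles of the factors are symmetric.

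The \emph{spanning claim is the main obstacle}, and I would establish it by contradiction. If $c\perp\operatorname{span}\{\beta\}$, then $\sum_k\overline{c_k}\prod_{j\ge2}\langle\psi_j,f_{j,k,m_j}\rangle=0$ for all $\psi_j$ and all $m_j$. Freezing the factors $j\ge3$ and letting $\psi_2$ range over $\mathcal H_2$ forces $\sum_k d_k\,f_{2,k,m_2}=0$ for every $m_2$, where $d_k=\overline{c_k}\prod_{j\ge3}\langle\psi_j,f_{j,k,m_j}\rangle$; linear independence of $\{f_{2,k,\cdot}\}_k$ then gives $d_k=0$ for all $k$. Peeling in this way leaves, for each $k$ with $c_k\ne0$, the condition $\prod_{j\ge3}\langle\psi_j,f_{j,k,m_j}\rangle=0$ for all choices, which forces some sequence $\{f_{j,k,\cdot}\}$ to vanish identically --- impossible, since a linearly independent family of sequences contains no zero sequence. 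Hence $c=0$ and the vectors span $\mathbb C^r$. Note that $d>1$ is essential, since the freezing step requires at least one extra factor.

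Finally, part (ii) follows from (i). A frame is Bessel, so by (i) each $\{f_{j,k,n}\}$ is Bessel and the concatenation $\{f_{j,k,n}\}_{k,n}$ inherits an upper bound. For the lower bound I would test the lower frame inequality against $\phi_1\otimes\Psi$ with $\Psi\in\bigotimes_{j\ge2}\mathcal H_j$: writing $a_k(n_1)=\langle\phi_1,f_{1,k,n_1}\rangle$ and $\gamma_k(\mathbf n')=\langle\Psi,\bigotimes_{j\ge2}f_{j,k,n_j}\rangle$, Cauchy--Schwarz in the index $k$ together with the Bessel bound $\sum_{\mathbf n'}|\gamma_k(\mathbf n')|^2\le(\prod_{j\ge2}B_{j,k})\|\Psi\|^2$ yield $A\|\phi_1\|^2\|\Psi\|^2\le\big(\sum_k\prod_{j\ge2}B_{j,k}\big)\|\Psi\|^2\sum_{k,n_1}|a_k(n_1)|^2$. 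Cancelling $\|\Psi\|^2$ for any $\Psi\ne0$ gives the lower frame bound $A\big/\sum_k\prod_{j\ge2}B_{j,k}$ for the concatenation, and by symmetry the same argument applies to every $j$.
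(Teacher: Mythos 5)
Your proof is correct, and it takes a genuinely different route from the paper's. The paper first proves the case $d=2$ (its Theorem \ref{Bessel}) by operator theory: it encodes $\mathcal{F}$ as its associated analysis operator $F$, approximates $F$ in the strong operator topology by truncations of Schmidt rank at most $r$, invokes Theorem \ref{conv} to conclude $\mathrm{rank}_{\otimes}(F)\le r$, and then uses Theorem \ref{thmFMS} and Corollary \ref{unique} to express each $\{f_{j,k,n}\}$ as a finite linear combination of the Bessel sequences $\{F_{j,k}^{*}(e_{j,n})\}$ arising from a Schmidt decomposition of $F$; part (i) of Theorem \ref{Main} then follows by induction on $d$, and part (ii) is deduced from the left-inverse identity \eqref{inv1} (Theorem \ref{inv}) together with Theorem \ref{frame2}. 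You bypass all of this machinery: testing the Bessel inequality on elementary tensors and slicing the index set produces, for each coefficient vector $\beta$ of product form, a Bessel sequence $\{\sum_{k}\overline{\beta_k}f_{1,k,n}\}_{n}$, and your finite-dimensional spanning argument --- where the linear independence of the component sequences enters, playing exactly the role the FMS hypothesis plays in the paper --- lets you invert an $r\times r$ matrix and conclude. The shared core is the same (each $\{f_{j,k,n}\}$ is exhibited as a finite linear combination of sequences already known to be Bessel), but you manufacture those sequences by hand, treat all $d$ at once without induction, never need Theorem \ref{frame2}, and obtain explicit constants (e.g.\ the lower frame bound $A/\sum_{k}\prod_{j\ge 2}B_{j,k}$ in (ii)); the paper's heavier route, in exchange, yields the operator-level identities \eqref{inv1}--\eqref{inv2} of Theorem \ref{Bessel}(ii)--(iii), which it reuses elsewhere. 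Two details in your write-up deserve to be made explicit: the ``Bessel analogue of Theorem \ref{frame2}'' must hold for all $f$, not only elementary tensors, so it needs the short iteration via an ONB and the Riesz representation (or the observation that the analysis operator of a tensor-product sequence is the tensor product of the analysis operators); and in the spanning argument the case $d=2$ should be flagged separately, since there the product over $j\ge 3$ is empty and the peeling at the factor $j=2$ already gives $c=0$ directly.
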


Section 2 includes results from \cite{Bou1,Bou2} along with other and new results in operator theory that will be used for our proofs.
In Section 3, we prove our main theorem. In Section 4, we deduce  several results on the sum of Gabor systems generated by rank one functions in $L^2(\mathbb{R}^d)$, where $d>1$. Surprisingly, the versions of some of these results for $d=1$ are  difficult to prove or disapprove.  For example,  we   prove the following density theorem for $d>1$:
\begin{examp}\label{examp}
  If $\mathcal{G}(e^{-|x|^2 } + e^{-|x|},a_1, b_1)$ is a frame in $L^2(\mathbb{R}^d)$, then $ 0 < a_1b_1 <1$.
\end{examp}
Meanwhile,  it is not clear whether the above statement holds or not for $d=1$.

\section{Operator Theory Background}
\subsection{Preliminaries and Notations}
Excluding Definition \ref{Schmidt}, all definitions and results in this subsection are included in  \cite{Kad}. We denote by $\mathcal{H}'$ the dual of $\mathcal{H}$.  If $x \in \mathcal{H}$, we denote by $x^*$ the linear form  $x^*(y)=\langle y, x \rangle$, $\forall y \in \mathcal{H}$. The transpose of the linear mapping $F: \mathcal{H} \rightarrow \mathcal{K}$ is $F^{t}: \mathcal{K}' \rightarrow \mathcal{H}'$ ,  where $F^{t}(x^{*})=x^{*}\circ F$, $ \forall x^{*} \in \mathcal{K}'$.

The Hilbert space $\mathcal{H}_1\otimes\mathcal{H}_2$ can be interpreted as the Hilbert space $\mathcal{L}_2(\mathcal{H}_1',\mathcal{H}_2)$, the space of Hilbert-Schmidt operators. The tensor product of  $F_1 \in \mathcal{B}(\mathcal{H}_1, \mathcal{K}_1)$ and  $F_2 \in \mathcal{B}(\mathcal{H}_2, \mathcal{K}_2)$ can be defined by  $F_1 \otimes F_2(H)= F_2H \ F_1^t$, for each $ H \in  \mathcal{L}_2(\mathcal{H}_1',\mathcal{H}_2)$.

\begin{defn}\label{Schmidt} We say that $F \in \mathcal{B}(\mathcal{H}_1 \otimes \mathcal{H}_2, \mathcal{K}_1 \otimes \mathcal{K}_2)$ is a \emph{finite Schmidt rank} (FSR), if it can be written in the form
\begin{eqnarray}\label{decomp}
F=\sum_{k=1}^r F_{1,k} \otimes F_{2,k},
\end{eqnarray}
where $\{ F_{1,k}\}_{k=1}^r \subset \mathcal{B}(\mathcal{H}_1, \mathcal{K}_1)$ and $\{ F_{2,k}\}_{k=1}^r \subset \mathcal{B}(\mathcal{H}_2, \mathcal{K}_2)$.
If $r$ is the minimum number such that $F$ can be written in form \eqref{decomp}, we say that $r$ is the \emph{Schmidt rank} of  $F$, denote $\mbox{rank}_\otimes(F)=r$,  and call equality  \eqref{decomp} a \emph{Schmidt decomposition} of $F$.
\end{defn}

\subsection{Schmidt Decomposition of FSR Bounded Operators} The first proposition in this subsection collects results from \cite{Bou1}.
\begin{prop}\label{prop}
Let $u=u_1\otimes u_2 \in \mathcal{H}_1 \otimes \mathcal{H}_2$ and let $v=v_1\otimes v_2 \in \mathcal{K}_1 \otimes \mathcal{K}_2$. We define the  bilinear operator
\begin{eqnarray*}
\mathcal{P}_{u, v}:  \mathcal{B}(\mathcal{H}_1\otimes\mathcal{H}_2,\mathcal{K}_1\otimes\mathcal{K}_2 )^2 &\longrightarrow & \mathcal{B}(\mathcal{H}_1,\mathcal{K}_1) \otimes  \mathcal{B}(\mathcal{H}_2,\mathcal{K}_2)\\
(F,G) &\longrightarrow & V^{v_2} F U^{u_2} \otimes  V_{v_1}G U_{u_1},
\end{eqnarray*}
where
\begin{eqnarray*}
U_{u_1}:\mathcal{H}_2  \rightarrow \mathcal{H}_1\otimes \mathcal{H}_2,  &\mbox{given by}& U_{u_1}(x_2)= u_1 \otimes x_2,\\
U^{u_2}:\mathcal{H}_1 \rightarrow \mathcal{H}_1\otimes \mathcal{H}_2,  &\mbox{given by}& U^{u_2}(x_1)= x_1 \otimes u_2, \\
V_{v_1}:\mathcal{K}_1\otimes\mathcal{K}_2\rightarrow \mathcal{K}_2,  &\mbox{given by}& V_{v_1}(H)= H(v_1^*),\\
V^{v_2}:\mathcal{K}_1\otimes\mathcal{K}_2\rightarrow \mathcal{K}_1,  &\mbox{given by}& V^{v_2}(H)=  H^t(v_2^*).
\end{eqnarray*}

\begin{enumerate}[(i)]
  \item The operators $V_{v_1} $,  $V^{v_2}$, $U_{u_1}$ , $U^{u_2}$, $\mathcal{P}_{u, v}$,   are bounded and we have
  \begin{eqnarray*}
  \| V_{v_1} \| = \|v_1\|, \quad \| V^{v_2} \| &=& \|v_2\|, \quad \| \mathcal{P}_{u, v} \| \leq \|u\| \|v\|
  \end{eqnarray*}

  \item The mapping $\mathcal{D}_{u, v}(F) = \mathcal{P}_{u, v}(F,F)$ is continuous and we have
 \begin{eqnarray*}
\| \mathcal{D}_{u, v}(F)-\mathcal{D}_{u, v}(G)\| \leq  \|u\|\| v\|\left(\|F\|+\|G\|\right)\|F-G\|.
\end{eqnarray*}
\item $\emph{rank}_\otimes (F) =1$ if and only if $\mathcal{D}_{u, v}(F)=<F(u),v>F$.
  \end{enumerate}
\end{prop}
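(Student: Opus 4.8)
The plan is to establish the two implications separately, reading the claimed identity $\mathcal{D}_{u,v}(F)=\langle F(u),v\rangle F$ as an equality that is to hold for all elementary vectors $u=u_1\otimes u_2$ and $v=v_1\otimes v_2$ (both sides being compared inside $\mathcal{B}(\mathcal{H}_1\otimes\mathcal{H}_2,\mathcal{K}_1\otimes\mathcal{K}_2)$ via the tensor-of-operators identification). For the forward implication I would assume $\mathrm{rank}_\otimes(F)=1$, write $F=F_1\otimes F_2$, and evaluate the two partial reductions $V^{v_2}FU^{u_2}$ and $V_{v_1}FU_{u_1}$ one at a time in the Hilbert-Schmidt picture. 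A vector $x_1\in\mathcal{H}_1$ is sent by $U^{u_2}$ to $x_1\otimes u_2$, then by $F$ to $F_1x_1\otimes F_2u_2$ (using $(F_1\otimes F_2)(a\otimes b)=F_1a\otimes F_2b$), and finally contracted by $V^{v_2}$; unwinding the transpose in $V^{v_2}(H)=H^t(v_2^*)$ and invoking reflexivity gives $V^{v_2}FU^{u_2}=\langle F_2u_2,v_2\rangle\,F_1$. The parallel but more direct computation with $V_{v_1}(H)=H(v_1^*)$ gives $V_{v_1}FU_{u_1}=\langle F_1u_1,v_1\rangle\,F_2$. Tensoring these and using $\langle F(u),v\rangle=\langle F_1u_1,v_1\rangle\langle F_2u_2,v_2\rangle$ yields $\mathcal{D}_{u,v}(F)=\langle F_1u_1,v_1\rangle\langle F_2u_2,v_2\rangle\,(F_1\otimes F_2)=\langle F(u),v\rangle F$ for every $u,v$.

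For the converse the decisive remark is structural: by its very definition $\mathcal{D}_{u,v}(F)=\big(V^{v_2}FU^{u_2}\big)\otimes\big(V_{v_1}FU_{u_1}\big)$ is a single tensor product of two operators, so, read as an element of $\mathcal{B}(\mathcal{H}_1\otimes\mathcal{H}_2,\mathcal{K}_1\otimes\mathcal{K}_2)$, it has Schmidt rank at most one for every $F$ and every $u,v$. Assuming the identity holds for all elementary $u,v$ and that $F\neq 0$, I would use the density of elementary tensors in $\mathcal{H}_1\otimes\mathcal{H}_2$ and $\mathcal{K}_1\otimes\mathcal{K}_2$ together with the continuity of $F$ to pick $u,v$ with $\langle F(u),v\rangle\neq 0$. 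For that choice the identity rewrites $F$ as $\langle F(u),v\rangle^{-1}\big(V^{v_2}FU^{u_2}\big)\otimes\big(V_{v_1}FU_{u_1}\big)$, a nonzero scalar multiple of an elementary tensor of operators, whence $\mathrm{rank}_\otimes(F)\leq 1$; since $F\neq 0$ this forces $\mathrm{rank}_\otimes(F)=1$.

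I expect the main obstacle to be the bookkeeping in the first reduction $V^{v_2}FU^{u_2}$: in contrast to $V_{v_1}$, the map $V^{v_2}$ is built from the transpose $H^t$, so one must pass to the bidual and use reflexivity of the Hilbert spaces to return to $\mathcal{K}_1$ and recognize the outcome as $\langle F_2u_2,v_2\rangle F_1$. The only other delicate point is the degenerate case in the converse: the identity is vacuous when $\langle F(u),v\rangle=0$, which is precisely why the density step is needed to secure a pairing with $\langle F(u),v\rangle\neq 0$ and thereby conclude that $F$ itself---not merely one of its reductions---is elementary.
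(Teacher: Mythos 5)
Your proposal addresses only statement (iii) of the proposition, and that is the concrete gap. (For context: the paper itself never proves Proposition \ref{prop} --- it is imported from \cite{Bou1} --- so your argument can only be measured against the statement, which has three parts.) Statements (i) and (ii), namely the boundedness of $V_{v_1}$, $V^{v_2}$, $U_{u_1}$, $U^{u_2}$, $\mathcal{P}_{u,v}$ with the norm identities $\|V_{v_1}\|=\|v_1\|$, $\|V^{v_2}\|=\|v_2\|$, $\|\mathcal{P}_{u,v}\|\le\|u\|\|v\|$, and the Lipschitz-type estimate for $\mathcal{D}_{u,v}$, never appear in your write-up, yet these are exactly the facts the rest of the paper leans on (e.g.\ in the strong-convergence argument feeding Theorem \ref{conv} and in the proof of Theorem \ref{Bessel}). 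The repairs are short but must be written: $\|U_{u_1}x_2\|=\|u_1\|\|x_2\|$ gives $\|U_{u_1}\|=\|u_1\|$ (similarly for $U^{u_2}$); for $V_{v_1}$, viewing $H\in\mathcal{K}_1\otimes\mathcal{K}_2$ as a Hilbert--Schmidt operator, $\|H(v_1^*)\|\le\|H\|_{op}\|v_1\|\le\|H\|\|v_1\|$, and testing on $H=v_1\otimes y_2$ with $\|y_2\|=1$ shows the bound is attained, so $\|V_{v_1}\|=\|v_1\|$ (symmetrically for $V^{v_2}$); then, using $\|A\otimes B\|=\|A\|\|B\|$, one gets $\|\mathcal{P}_{u,v}(F,G)\|\le\|v_2\|\|u_2\|\|v_1\|\|u_1\|\|F\|\|G\|=\|u\|\|v\|\|F\|\|G\|$ since $\|u\|=\|u_1\|\|u_2\|$ and $\|v\|=\|v_1\|\|v_2\|$. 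Statement (ii) then follows from bilinearity alone: $\mathcal{D}_{u,v}(F)-\mathcal{D}_{u,v}(G)=\mathcal{P}_{u,v}(F-G,F)+\mathcal{P}_{u,v}(G,F-G)$, and applying the bound on $\|\mathcal{P}_{u,v}\|$ to each term gives the stated inequality.

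Your treatment of (iii) itself is essentially correct: the computations $V^{v_2}FU^{u_2}=\langle F_2u_2,v_2\rangle F_1$ and $V_{v_1}FU_{u_1}=\langle F_1u_1,v_1\rangle F_2$ for $F=F_1\otimes F_2$ are right, and the converse correctly exploits that $\mathcal{D}_{u,v}(F)$ is by construction an elementary tensor of operators. Two caveats. First, you have quietly changed the quantification: the proposition fixes $u$ and $v$ at the outset, and with a single fixed pair the ``only if'' direction is false in degenerate cases --- for instance, if $F=F_1\otimes F_2+G_1\otimes G_2$ is a minimal sum and $v_1\perp\mathrm{span}\{F_1u_1,G_1u_1\}$, then $V_{v_1}FU_{u_1}=0$, so $\mathcal{D}_{u,v}(F)=0=\langle F(u),v\rangle F$ while $\mathrm{rank}_\otimes(F)=2$; likewise $F=0$ satisfies the identity with rank $0$. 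Your reading (quantify over all elementary $u,v$ and assume $F\ne0$) is the sensible way to make the equivalence true, and it matches how the paper actually uses the result (Theorem \ref{algorithm} and Corollary \ref{unique} always normalize $\langle F(u),v\rangle=1$), but it should be flagged explicitly as a strengthening of the hypothesis rather than passed off as the literal statement. Second, a small slip of language: it is the \emph{span} of the elementary tensors that is dense, not the set of elementary tensors itself; the correct argument is that $\langle F(u),v\rangle=0$ for all elementary $u,v$ forces, by sesquilinearity, density of the spans, and continuity of $F$, the conclusion $F=0$. Neither caveat is fatal; the real deficiency is the missing proof of (i) and (ii).
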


The following results can be found in \cite{Bou2}.

\begin{thm}\label{conv}Let $\{F_N\}_{N>0} $ be a bounded sequence in $ \mathcal{B}(\mathcal{H}_1 \otimes \mathcal{H}_2, \mathcal{K}_1 \otimes \mathcal{K}_2)$ that converges in the strong operator topology to $ F \in \mathcal{B}(\mathcal{H}_1 \otimes \mathcal{H}_2, \mathcal{K}_1 \otimes \mathcal{K}_2)$. If $rank_{\otimes}(F_N) \leq r$ for each $N>0$, then $rank_{\otimes}(F) \leq r$.
\end{thm}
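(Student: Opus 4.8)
The plan is to characterize the Schmidt rank through the left ``slice'' operator that already appears inside $\mathcal{P}_{u,v}$, and then to show that the associated finite dimensionality is preserved under strong limits. For $G\in\mathcal{B}(\mathcal{H}_1\otimes\mathcal{H}_2,\mathcal{K}_1\otimes\mathcal{K}_2)$, $u_2\in\mathcal{H}_2$ and $v_2\in\mathcal{K}_2$, I would write $A(G;u_2,v_2)=V^{v_2}\,G\,U^{u_2}\in\mathcal{B}(\mathcal{H}_1,\mathcal{K}_1)$. First I would record the identity that if $G=\sum_{k=1}^{m}G_{1,k}\otimes G_{2,k}$, then $A(G;u_2,v_2)=\sum_{k=1}^m\langle G_{2,k}u_2,v_2\rangle\,G_{1,k}$, which follows from $V^{v_2}(a\otimes b)=\langle b,v_2\rangle a$ and $U^{u_2}x_1=x_1\otimes u_2$. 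Hence whenever $\mathrm{rank}_\otimes(G)\le r$, every $A(G;u_2,v_2)$ lies in the span of $r$ fixed operators, so the subspace $W(G):=\mathrm{span}\{A(G;u_2,v_2):u_2\in\mathcal{H}_2,\,v_2\in\mathcal{K}_2\}$ of $\mathcal{B}(\mathcal{H}_1,\mathcal{K}_1)$ has dimension at most $r$. I would also prove the converse, needed at the end: if $\dim W(G)=s<\infty$, then $\mathrm{rank}_\otimes(G)\le s$. This reconstruction proceeds by fixing a basis $G_{1,1},\dots,G_{1,s}$ of $W(G)$, writing $A(G;u_2,v_2)=\sum_k\phi_k(u_2,v_2)G_{1,k}$, noting that each $\phi_k$ is a bounded sesquilinear form (linear in $u_2$, conjugate linear in $v_2$, with $|\phi_k(u_2,v_2)|\le C\|u_2\|\|v_2\|$ via Proposition \ref{prop}(i)), representing it as $\phi_k(u_2,v_2)=\langle G_{2,k}u_2,v_2\rangle$, and checking $G=\sum_k G_{1,k}\otimes G_{2,k}$ on all simple tensors using the adjoint-type identity $\langle V^{v_2}H,y_1\rangle=\langle H,y_1\otimes v_2\rangle$.

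With this dictionary the theorem reduces to showing $\dim W(F)\le r$. I would fix $r+1$ pairs $(u_2^{(i)},v_2^{(i)})$ and set $A_i^N=A(F_N;u_2^{(i)},v_2^{(i)})$ and $A_i=A(F;u_2^{(i)},v_2^{(i)})$. Since $F_N\to F$ in the strong operator topology and $U^{u_2^{(i)}}$, $V^{v_2^{(i)}}$ are fixed bounded operators, for each $x\in\mathcal{H}_1$ one has $A_i^Nx=V^{v_2^{(i)}}\big(F_N(x\otimes u_2^{(i)})\big)\to V^{v_2^{(i)}}\big(F(x\otimes u_2^{(i)})\big)=A_ix$ in norm, so $A_i^N\to A_i$ strongly. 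Because $\mathrm{rank}_\otimes(F_N)\le r$, the operators $A_1^N,\dots,A_{r+1}^N$ lie in an at most $r$-dimensional space and are therefore linearly dependent: there is $c^N=(c_1^N,\dots,c_{r+1}^N)$ with $\|c^N\|=1$ and $\sum_i c_i^N A_i^N=0$.

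The core step is to pass linear dependence to the limit. Since the coefficient vectors $c^N$ lie on the unit sphere of the finite-dimensional space $\mathbb{C}^{r+1}$, I would extract a convergent subsequence $c^{N_j}\to c$ with $\|c\|=1$, hence $c\ne 0$. For every $x\in\mathcal{H}_1$ the finite sum $\sum_i c_i^{N_j}A_i^{N_j}x$ vanishes for all $j$, and each term converges to $c_iA_ix$ because $c_i^{N_j}\to c_i$ and $A_i^{N_j}x\to A_ix$; letting $j\to\infty$ gives $\sum_i c_iA_ix=0$ for all $x$, i.e. $\sum_i c_iA_i=0$ with $c\ne0$. Thus $A_1,\dots,A_{r+1}$ are linearly dependent, and since the pairs were arbitrary, $\dim W(F)\le r$; the reconstruction of the first paragraph then yields $\mathrm{rank}_\otimes(F)\le r$.

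I expect the main obstacle to be the converse/reconstruction half of the first paragraph rather than the limiting argument: one must argue that $W(F)$ is genuinely finite-dimensional (a finite-dimensional subspace being automatically closed), that the coefficient functionals $\phi_k$ are bounded sesquilinear forms so that Riesz representation produces bona fide operators $G_{2,k}\in\mathcal{B}(\mathcal{H}_2,\mathcal{K}_2)$, and that the reconstructed operator coincides with $F$ on \emph{all} simple tensors via the adjoint identities for $V^{v_2}$ and $V_{v_1}$. By contrast, the strong-limit step is comparatively routine once one exploits the finite dimensionality of the coefficient space $\mathbb{C}^{r+1}$, which is precisely what makes linear dependence a closed condition here even though $\mathcal{B}(\mathcal{H}_1,\mathcal{K}_1)$ is infinite dimensional.
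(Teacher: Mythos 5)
Your proposal cannot be compared against an in-paper proof, because the paper does not prove Theorem \ref{conv}: it is stated as a quoted result from \cite{Bou2}. Judged on its own, your argument is correct and self-contained. The slice identity $A(G;u_2,v_2)=V^{v_2}GU^{u_2}=\sum_{k}\langle G_{2,k}u_2,v_2\rangle G_{1,k}$ is precisely the consequence of Proposition \ref{prop} that the paper itself invokes when proving Corollary \ref{unique}, so your toolkit is consistent with the paper's; what you add is the two-way dictionary $\mathrm{rank}_\otimes(G)\le r$ if and only if $\dim W(G)\le r$, and you correctly locate the nontrivial half in the reconstruction direction: finite-dimensionality of $W(G)$ makes the coordinate functionals continuous, so each $\phi_k$ is a bounded sesquilinear form, Riesz representation produces bounded $G_{2,k}\in\mathcal{B}(\mathcal{H}_2,\mathcal{K}_2)$, and the identity $\langle V^{v_2}H,y_1\rangle=\langle H,y_1\otimes v_2\rangle$ together with density of the span of simple tensors forces $G=\sum_k G_{1,k}\otimes G_{2,k}$. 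The limiting step is also handled properly: strong convergence of $F_N$ plus fixedness of $U^{u_2}$, $V^{v_2}$ gives strong convergence of the slices, and compactness of the unit sphere of $\mathbb{C}^{r+1}$ turns linear dependence of $A_1^N,\dots,A_{r+1}^N$ into linear dependence of $A_1,\dots,A_{r+1}$, whence $\dim W(F)\le r$. One further observation in your favor: your argument never uses the hypothesis that $\{F_N\}$ is norm-bounded (that hypothesis is in any case redundant, since SOT convergence implies uniform boundedness by the Banach--Steinhaus theorem), so you have proved a formally cleaner statement than the one quoted.
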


\begin{defn}\label{defnFMS}
We say that  $\{F_{1,k} \otimes F_{2,k}\}_{k=1}^m$ is a \emph{finite minimal system} (FMS) in $ \mathcal{B}(\mathcal{H}_1 \otimes \mathcal{H}_2, \mathcal{K}_1 \otimes \mathcal{K}_2)$ if the system $\{F_{i,k} \}_{k=1}^m$ is independent   for each $i \in \{1,2\}$.
\end{defn}

\begin{thm}\label{thmFMS} Let $ F \in \mathcal{B}(\mathcal{H}_1 \otimes \mathcal{H}_2, \mathcal{K}_1 \otimes \mathcal{K}_2)$. Equality \eqref{decomp}
is a Schmidt decomposition of $F$ if and only if  $\{ F_{1,k} \otimes F_{2,k}\}_{k=1}^r$ is a FMS.
\end{thm}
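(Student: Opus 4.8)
The plan is to establish the two implications through a single contraction device built from the operators $U^{u_2}$ and $V^{v_2}$ of Proposition~\ref{prop}, which convert a tensor factor into an honest operator. The starting point is the identity $V^{v_2}(F_1\otimes F_2)U^{u_2}=\langle F_2u_2,v_2\rangle\,F_1$, valid for every $u_2\in\mathcal{H}_2$ and $v_2\in\mathcal{K}_2$; it follows directly from the definitions $F_1\otimes F_2(H)=F_2HF_1^t$, $U^{u_2}(x_1)=x_1\otimes u_2$, and $V^{v_2}(H)=H^t(v_2^*)$. Applying it term by term to \eqref{decomp} yields $V^{v_2}FU^{u_2}=\sum_{k=1}^r\langle F_{2,k}u_2,v_2\rangle\,F_{1,k}$, an operator equation that can then be exploited for \emph{any} representation of $F$, so that injectivity of the passage from abstract tensors to operators is never needed.

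First I would treat the direction that a Schmidt decomposition is automatically a FMS, arguing by contraposition. If, say, $\{F_{1,k}\}_{k=1}^r$ were dependent, I would solve for one factor, $F_{1,r}=\sum_{k<r}c_kF_{1,k}$, substitute into \eqref{decomp}, and regroup to obtain $F=\sum_{k=1}^{r-1}F_{1,k}\otimes(F_{2,k}+c_kF_{2,r})$, a representation with only $r-1$ simple tensors, contradicting $\mathrm{rank}_{\otimes}(F)=r$. The symmetric argument disposes of a dependent $\{F_{2,k}\}$, so minimality forces both families to be independent.

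For the converse I assume $\{F_{1,k}\otimes F_{2,k}\}_{k=1}^r$ is a FMS and must show $r$ cannot be lowered. Because $\{F_{1,k}\}$ is independent, each operator $V^{v_2}FU^{u_2}=\sum_k\langle F_{2,k}u_2,v_2\rangle F_{1,k}$ lies in the $r$-dimensional space $W:=\mathrm{span}\{F_{1,k}\}_{k=1}^r$. I then claim the linear span of $\{V^{v_2}FU^{u_2}:u_2\in\mathcal{H}_2,\ v_2\in\mathcal{K}_2\}$ is all of $W$: since the rank-one functionals $T\mapsto\langle Tu_2,v_2\rangle$ separate the nonzero points of $\mathrm{span}\{F_{2,k}\}$ (if $Tu_2\neq0$, take $v_2=Tu_2$), they span its dual, so suitable linear combinations realize the dual basis of $\{F_{2,k}\}$ and recover each $F_{1,i}$; hence this span has dimension exactly $r$. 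Finally, if $F=\sum_{l=1}^sG_{1,l}\otimes G_{2,l}$ is any representation, the same identity gives $V^{v_2}FU^{u_2}=\sum_l\langle G_{2,l}u_2,v_2\rangle G_{1,l}\in\mathrm{span}\{G_{1,l}\}_{l=1}^s$, a space of dimension at most $s$. Comparing the two descriptions of the span forces $r\le s$, whence $r=\mathrm{rank}_{\otimes}(F)$ and \eqref{decomp} is a Schmidt decomposition.

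The main obstacle is the converse, and within it the one genuinely nonformal step is the passage from separation to spanning: I must know that the bounded rank-one contractions $T\mapsto\langle Tu_2,v_2\rangle$, restricted to the finite-dimensional space $\mathrm{span}\{F_{2,k}\}$, span its dual. This reduces to the elementary fact that on a finite-dimensional space a separating family of linear functionals is automatically spanning, so no completeness or approximation issue intervenes. The independence of $\{F_{2,k}\}$ is used precisely here, while the independence of $\{F_{1,k}\}$ is what pins the dimension of $W$ at $r$; everything else is the bookkeeping of linear algebra applied to the operator identity above.
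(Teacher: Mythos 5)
Your proof is correct. There is, in fact, nothing in the paper to compare it against: Theorem \ref{thmFMS} is quoted from \cite{Bou2} and no proof is reproduced in this paper, so your argument serves as a self-contained substitute. Both halves check out. The absorption argument for the forward direction (a dependence $F_{1,r}=\sum_{k<r}c_kF_{1,k}$ regroups \eqref{decomp} into $r-1$ simple tensors, contradicting minimality of $r$) is exactly right, since the tensor product of operators is bilinear. For the converse, your contraction identity $V^{v_2}(F_1\otimes F_2)U^{u_2}=\langle F_2u_2,v_2\rangle F_1$ does follow from the definitions in Proposition \ref{prop}; it is in fact the same identity the paper uses silently in the proof of Corollary \ref{unique}, where $F_{1,1}=\sum_{k}\langle G_{2,k}(u_2),v_2\rangle G_{1,k}$ is derived. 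Your dimension count is then airtight: independence of $\{F_{1,k}\}$ pins $\dim W=r$; independence of $\{F_{2,k}\}$, together with the elementary fact that a separating family of linear functionals on a finite-dimensional space spans its dual, gives $\mathrm{span}\{V^{v_2}FU^{u_2}: u_2\in\mathcal{H}_2,\ v_2\in\mathcal{K}_2\}=W$; and any $s$-term representation of $F$ traps this span in a space of dimension at most $s$, so $r\le s$. A stylistic gain of your route is that it bypasses the rank-reduction machinery (Theorem \ref{algorithm}) on which the development in \cite{Bou2} rests, using only linear algebra once the contraction identity is in hand; it also delivers, as a by-product, the span equality of Corollary \ref{unique} without invoking the decomposition algorithm.
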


\begin{thm} \label{algorithm} Let $F \in \mathcal{B}(\mathcal{H}_1 \otimes \mathcal{H}_2, \mathcal{K}_1 \otimes \mathcal{K}_2) \setminus \{0\}$ be a FSR.  Let $(u,v)=(u_1\otimes u_2, v_1\otimes v_2) \in \mathcal{H}_1 \otimes \mathcal{H}_2 \times \mathcal{K}_1 \otimes \mathcal{K}_2$. If $\langle F(u),v \rangle = 1$, then
\begin{eqnarray*}
\emph{rank}_\otimes \left( F -\mathcal{D}_{u,v}(F) \right) = \emph{rank}_\otimes (F)-1.
\end{eqnarray*}
\end{thm}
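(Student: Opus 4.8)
The plan is to fix a Schmidt decomposition of $F$ and reduce the whole statement to the rank of a rank-one update of the identity matrix. Since $F$ is FSR, set $r=\mathrm{rank}_{\otimes}(F)$ and fix a Schmidt decomposition $F=\sum_{k=1}^{r}F_{1,k}\otimes F_{2,k}$. By Theorem \ref{thmFMS} this forces $\{F_{1,k}\otimes F_{2,k}\}_{k=1}^{r}$ to be a FMS, so that both families $\{F_{1,k}\}_{k=1}^{r}$ and $\{F_{2,k}\}_{k=1}^{r}$ are linearly independent; this independence is exactly what I will need at the end.

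First I would compute $\mathcal{D}_{u,v}(F)=V^{v_2}FU^{u_2}\otimes V_{v_1}FU_{u_1}$ explicitly. Because $F\mapsto V^{v_2}FU^{u_2}$ and $F\mapsto V_{v_1}FU_{u_1}$ are linear, it suffices to evaluate them on the simple tensors $F_{1,k}\otimes F_{2,k}$. Using the Hilbert--Schmidt identification of $\mathcal{K}_1\otimes\mathcal{K}_2$ and the definitions of $V^{v_2}$ and $V_{v_1}$, one gets $V^{v_2}(F_{1,k}\otimes F_{2,k})U^{u_2}=\langle F_{2,k}u_2,v_2\rangle F_{1,k}$ and $V_{v_1}(F_{1,k}\otimes F_{2,k})U_{u_1}=\langle F_{1,k}u_1,v_1\rangle F_{2,k}$. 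Writing $\alpha_k=\langle F_{2,k}u_2,v_2\rangle$ and $\beta_k=\langle F_{1,k}u_1,v_1\rangle$, this yields $\mathcal{D}_{u,v}(F)=\big(\sum_k\alpha_k F_{1,k}\big)\otimes\big(\sum_l\beta_l F_{2,l}\big)=\sum_{k,l}\alpha_k\beta_l\,F_{1,k}\otimes F_{2,l}$, while the hypothesis reads $\langle F(u),v\rangle=\sum_k\beta_k\alpha_k=1$.

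Subtracting, I obtain $F-\mathcal{D}_{u,v}(F)=\sum_{k,l}M_{kl}\,F_{1,k}\otimes F_{2,l}$ with the $r\times r$ matrix $M=I_r-\alpha\beta^{T}$, where $\alpha=(\alpha_k)$ and $\beta=(\beta_k)$ are column vectors. Here the hypothesis pays off twice: since $\beta^{T}\alpha=\langle F(u),v\rangle=1\neq 0$ both $\alpha$ and $\beta$ are nonzero, so $\alpha\beta^{T}$ is genuinely rank one with unique nonzero eigenvalue $\beta^{T}\alpha=1$ (eigenvector $\alpha$) and kernel of dimension $r-1$. Consequently $M=I_r-\alpha\beta^{T}$ has eigenvalue $0$ with multiplicity exactly one and eigenvalue $1$ with multiplicity $r-1$, so $\mathrm{rank}(M)=r-1$. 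To convert this back into a Schmidt rank I would take a rank factorization $M=AB$ with $A\in\mathbb{C}^{r\times(r-1)}$ and $B\in\mathbb{C}^{(r-1)\times r}$ both of full rank $r-1$; then $F-\mathcal{D}_{u,v}(F)=\sum_{m=1}^{r-1}G_{1,m}\otimes G_{2,m}$ with $G_{1,m}=\sum_k A_{km}F_{1,k}$ and $G_{2,m}=\sum_l B_{ml}F_{2,l}$. Full column rank of $A$ together with independence of $\{F_{1,k}\}$ makes $\{G_{1,m}\}$ independent, and likewise for $\{G_{2,m}\}$; hence $\{G_{1,m}\otimes G_{2,m}\}_{m=1}^{r-1}$ is a FMS and, by Theorem \ref{thmFMS}, the displayed sum is a Schmidt decomposition, giving $\mathrm{rank}_{\otimes}(F-\mathcal{D}_{u,v}(F))=r-1$.

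The main obstacle I anticipate is the second step: evaluating $\mathcal{D}_{u,v}$ on the Schmidt decomposition with the Hilbert--Schmidt identifications and the actions of $V^{v_2},V_{v_1},U^{u_2},U_{u_1}$ pinned down correctly, so that the scalars pair up precisely as $\alpha_k\beta_l$ and the diagonal term $\sum_k\alpha_k\beta_k$ matches $\langle F(u),v\rangle$. Once that bilinear structure is established, the remainder is the elementary fact that $I_r-\alpha\beta^{T}$ has rank $r-1$ when $\beta^{T}\alpha=1$, combined with Theorem \ref{thmFMS}.
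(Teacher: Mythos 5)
Your proof is correct, but note one structural point: this paper does not actually prove Theorem \ref{algorithm} at all --- it is one of the results imported verbatim from \cite{Bou2} (``The following results can be found in \cite{Bou2}''), so there is no in-paper proof to compare against. Judged on its own, your argument is sound and self-contained given the ingredients the paper does supply: the identities $V^{v_2}(F_{1,k}\otimes F_{2,k})U^{u_2}=\langle F_{2,k}u_2,v_2\rangle F_{1,k}$ and $V_{v_1}(F_{1,k}\otimes F_{2,k})U_{u_1}=\langle F_{1,k}u_1,v_1\rangle F_{2,k}$ follow directly from the Hilbert--Schmidt model $F_1\otimes F_2(H)=F_2HF_1^{t}$ and the definitions in Proposition \ref{prop}; bilinearity then gives $\mathcal{D}_{u,v}(F)=\sum_{k,l}\alpha_k\beta_l\,F_{1,k}\otimes F_{2,l}$ and $\langle F(u),v\rangle=\beta^{T}\alpha$; the reduction of the Schmidt rank of $F-\mathcal{D}_{u,v}(F)$ to the ordinary rank of $M=I_r-\alpha\beta^{T}$ is legitimate in both directions because Theorem \ref{thmFMS} converts the rank factorization $M=AB$ (with the independence of $\{G_{1,m}\}$ and $\{G_{2,m}\}$ inherited from that of $\{F_{1,k}\}$ and $\{F_{2,k}\}$ via full column rank of $A$ and $B^{T}$) into a genuine Schmidt decomposition, and $\ker(I_r-\alpha\beta^{T})=\mathrm{span}\{\alpha\}$ when $\beta^{T}\alpha=1$ gives $\mathrm{rank}(M)=r-1$ exactly. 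The only cosmetic caveat is the degenerate case $r=1$: there $M$ is the $1\times1$ zero matrix and $F-\mathcal{D}_{u,v}(F)=0$, so your conclusion rests on the convention $\mathrm{rank}_\otimes(0)=0$ (empty Schmidt decomposition); this is the same convention the theorem itself needs to be meaningful, and it is consistent with Proposition \ref{prop}(iii), so nothing is lost.
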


Based on Theorem \ref{algorithm}, one can develop an algorithm to compute the Schmidt decomposition of any FSR $F \in \mathcal{B}(\mathcal{H}_1 \otimes \mathcal{H}_2, \mathcal{K}_1 \otimes \mathcal{K}_2) \setminus \{0\}$. The algorithm leads to a decomposition of $F$ after $r$ steps if and only if $\mbox{rank}_\otimes (F) =r$.

We finish this section with two new results.
\begin{cor}\label{unique} If $F \in \mathcal{B}(\mathcal{H}_1 \otimes \mathcal{H}_2, \mathcal{K}_1 \otimes \mathcal{K}_2)$ has a finite Schmidt rank $r$ and
\begin{eqnarray*}
F = \sum_{k=1}^r F_{1,k} \otimes F_{2,k} = \sum_{k=1}^r G_{1,k} \otimes G_{2,k},
\end{eqnarray*}
then, for each $ j \in\{1,2\}, span\{F_{j,1},...,F_{j,r} \}= span\{G_{j,1},...,G_{j,r}\}$.
\end{cor}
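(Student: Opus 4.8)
The plan is to attach to $F$ alone an intrinsic subspace of $\mathcal{B}(\mathcal{H}_1,\mathcal{K}_1)$ (and, symmetrically, of $\mathcal{B}(\mathcal{H}_2,\mathcal{K}_2)$) and then to show that \emph{every} Schmidt decomposition of $F$ recovers it as the span of its first (resp.\ second) factors. The natural device is the family of partial evaluation operators from Proposition \ref{prop}: for fixed $u_2\in\mathcal{H}_2$ and $v_2\in\mathcal{K}_2$ the operator $V^{v_2}FU^{u_2}\in\mathcal{B}(\mathcal{H}_1,\mathcal{K}_1)$ is defined through $F$ itself and so does not see how $F$ has been written as a sum of elementary tensors.

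First I would record how these operators act on a decomposition. Using $U^{u_2}(x_1)=x_1\otimes u_2$, the definition of the tensor product of operators, and the identity $V^{v_2}(w_1\otimes w_2)=\langle w_2,v_2\rangle\,w_1$ read off from $V^{v_2}(H)=H^t(v_2^*)$ on simple tensors, a direct computation gives
\begin{equation*}
V^{v_2}FU^{u_2}=\sum_{k=1}^r \langle F_{2,k}(u_2),v_2\rangle\, F_{1,k}.
\end{equation*}
In particular each such operator lies in $W_1:=\mathrm{span}\{F_{1,1},\dots,F_{1,r}\}$.

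Next I would prove the reverse inclusion, namely that the operators $V^{v_2}FU^{u_2}$ span all of $W_1$. Since $F$ has Schmidt rank $r$ and the decomposition has exactly $r$ terms, it is a Schmidt decomposition, so by Theorem \ref{thmFMS} the system $\{F_{1,k}\otimes F_{2,k}\}_{k=1}^r$ is a finite minimal system; hence both $\{F_{1,k}\}_{k=1}^r$ and $\{F_{2,k}\}_{k=1}^r$ are linearly independent. Independence of the second factors forces the scalar coefficients to be unrestricted: if $(a_k)\in\mathbb{C}^r$ satisfies $\sum_k \overline{a_k}\,\langle F_{2,k}(u_2),v_2\rangle=0$ for all $u_2,v_2$, then $\sum_k \overline{a_k}\,F_{2,k}=0$ and so $a=0$, whence the vectors $(\langle F_{2,k}(u_2),v_2\rangle)_k$ span $\mathbb{C}^r$. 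As $\{F_{1,k}\}$ is a basis of $W_1$, suitable linear combinations of the $V^{v_2}FU^{u_2}$ reproduce each $F_{1,k}$, giving
\begin{equation*}
\mathrm{span}\{V^{v_2}FU^{u_2}: u_2\in\mathcal{H}_2,\ v_2\in\mathcal{K}_2\}=W_1.
\end{equation*}
The left-hand side depends only on $F$, so running the same computation on the second decomposition (also an FMS, for the same reason) identifies this intrinsic span with $\mathrm{span}\{G_{1,l}\}$; hence $\mathrm{span}\{F_{1,k}\}=\mathrm{span}\{G_{1,l}\}$, which is the case $j=1$. The case $j=2$ follows verbatim after replacing $V^{v_2}FU^{u_2}$ by $V_{v_1}FU_{u_1}=\sum_k\langle F_{1,k}(u_1),v_1\rangle F_{2,k}$ and invoking independence of the first factors instead.

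I expect the only delicate step to be the surjectivity-type claim that the coefficients $(\langle F_{2,k}(u_2),v_2\rangle)_k$ exhaust $\mathbb{C}^r$. This is precisely where minimality enters through Theorem \ref{thmFMS}: without independence of the second factors the partial evaluations could fail to reach all of $W_1$, and then two decompositions would no longer be compelled to have equal first-factor spans.
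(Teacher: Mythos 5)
Your proof is correct, but it follows a genuinely different route from the paper's. The paper assumes WLOG that the first decomposition is the one produced by the rank-reduction algorithm of Theorem \ref{algorithm}: choosing $u,v$ with $\langle F(u),v\rangle=1$ and applying $\mathcal{D}_{u,v}$ to the $G$-decomposition, it obtains $F_{1,1}=\sum_{k=1}^r\langle G_{2,k}(u_2),v_2\rangle G_{1,k}$ (and the analogue for $F_{2,1}$), iterates the algorithm to place every $F_{j,k}$ in $\mathrm{span}\{G_{j,1},\dots,G_{j,r}\}$, and then converts this inclusion into equality by the dimension count coming from Theorem \ref{thmFMS}. You instead attach to $F$ the decomposition-independent subspace $\mathrm{span}\{V^{v_2}FU^{u_2}:u_2\in\mathcal{H}_2,\ v_2\in\mathcal{K}_2\}$ and prove it equals the first-factor span of \emph{any} $r$-term decomposition: one inclusion is the same evaluation identity the paper uses, and the reverse inclusion holds because independence of the second factors (via Theorem \ref{thmFMS}) forces the coefficient vectors $\left(\langle F_{2,k}(u_2),v_2\rangle\right)_{k=1}^r$ to span $\mathbb{C}^r$ --- your duality argument for this is sound. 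Equality of the $F$- and $G$-spans is then automatic by symmetry, with no algorithm, no iteration, and no normalization. What your route buys is a cleaner logical structure: it avoids the paper's ``we may assume WLOG'' step (which, strictly speaking, needs the further remark that the algorithm's output span is contained in, hence by dimension equal to, each of the two given spans) and the unelaborated ``similarly, computing the remaining terms'' iteration; the price is the extra, but short, surjectivity argument for the coefficient vectors. Both proofs ultimately rest on Theorem \ref{thmFMS}, but you use it to obtain independence of the factors, while the paper uses it to match dimensions.
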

\begin{proof}  We may assume WLG that the first decomposition of $F$ is a Schmidt decomposition of $F$ obtained using an algorithm based on Theorem \ref{algorithm}. Let $u=u_1 \otimes u_2 \in \mathcal{H}_1 \otimes \mathcal{H}_2$ and $v=v_1 \otimes v_2 \in \mathcal{K}_1 \otimes \mathcal{K}_2$, for which we have $\langle F(u),v\rangle =1$ and $F_1 \otimes F_2 =\mathcal{D}_{u,v}(F)$. Using Proposition \ref{prop}, we obtain
\begin{align*}
F_{1,1} = \sum_{k=1}^r   \langle G_{2,k}(u_{2}),v_{2} \rangle G_{1,k} \quad \mbox{ and } \quad F_{2,1} = \sum_{k=1}^r  \langle G_{1,k}(u_{1}),v_1 \rangle G_{2,k}.
\end{align*}
Therefore, $F_{1,1} \in  span\{G_{1,1},...,G_{1,r}\}$ and $F_{2,1} \in span\{G_{2,1},...,G_{2,r}\}$. Similarly, computing the remaining terms of the decomposition of $F$, we can obtain  $F_{1,k} \in span\{G_{1,1},...,G_{1,r}\}$ and $F_{2,k} \in span\{G_{2,1},...,G_{2,r}\}$ for each $k=1, \dots, r$.

Consequently, for each $j=1,2$, $span\{F_{j,1},...,F_{j,r} \}$ is a vector subspace of  $span\{G_{j,1},...,G_{j,r}\}$. Meanwhile, by Theorem \ref{thmFMS}, the last two vector spaces have the same dimension $r$, and so there are equal.
\end{proof}

\begin{thm}\label{inv}  Let $F \in \mathcal{B}(\mathcal{H}_1\otimes \mathcal{H}_2)$ be FSR and have the decomposition \eqref{decomp}.
\begin{enumerate}[(i)]
  \item If $F$ has a left inverse  $L \in \mathcal{B}(\mathcal{H}_1\otimes \mathcal{H}_2)$. Then there is a set of operators $\{L_{1,k} \otimes L_{2,k}  \}_{k=1}^r  \subset \mathcal{B}(\mathcal{H}_1\otimes \mathcal{H}_2)$ for which we have
\begin{eqnarray}\label{inv1}
\forall j \in \{1,2\}, \quad \sum_{k=1}^r L_{j,k} F_{j,k} = I_j.
\end{eqnarray}
  \item If $F$ has a right inverse  $R \in \mathcal{B}(\mathcal{H}_1\otimes \mathcal{H}_2)$. Then there is a set of operators $\{R_{1,k} \otimes R_{2,k}  \}_{k=1}^r  \subset \mathcal{B}(\mathcal{H}_1\otimes \mathcal{H}_2)$ for which we have
\begin{eqnarray}\label{inv2}
\forall j \in \{1,2\}, \quad \sum_{k=1}^rF_{j,k} R_{j,k}  = I_j.
\end{eqnarray}
\end{enumerate}
\end{thm}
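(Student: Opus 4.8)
The plan is to recognize each of the two desired identities as a one‑sided invertibility statement for a ``stacked'' operator built from the components, and then to reduce everything to the elementary Hilbert‑space fact that an operator is left‑invertible exactly when it is bounded below. For part (i), observe that $\sum_{k=1}^r L_{j,k}F_{j,k}=I_j$ says precisely that the column operator $\mathbf{F}_j:\mathcal{H}_j\to\bigoplus_{k=1}^r\mathcal{H}_j$, $x\mapsto(F_{j,1}x,\dots,F_{j,r}x)$, admits a bounded left inverse: any bounded $S:\bigoplus_{k=1}^r\mathcal{H}_j\to\mathcal{H}_j$ has the form $S(y_1,\dots,y_r)=\sum_k S_k y_k$ with $S_k\in\mathcal{B}(\mathcal{H}_j)$, and $S\mathbf{F}_j=I_j$ becomes $\sum_k S_kF_{j,k}=I_j$. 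Thus it suffices to show that $\mathbf{F}_1$ and $\mathbf{F}_2$ are each bounded below, starting from the hypothesis that $F$ is left‑invertible, i.e. bounded below: there is $c_0>0$ with $\|Fw\|\ge c_0\|w\|$ for all $w\in\mathcal{H}_1\otimes\mathcal{H}_2$.

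The main step is the reduction to simple tensors. Suppose, for contradiction, that $\mathbf{F}_1$ is not bounded below; then there are unit vectors $x_n\in\mathcal{H}_1$ with $\sum_{k}\|F_{1,k}x_n\|^2\to0$, so $\|F_{1,k}x_n\|\to0$ for every $k$. Fixing a unit vector $e\in\mathcal{H}_2$ and testing $F$ on the unit vectors $x_n\otimes e$, the triangle inequality gives $\|F(x_n\otimes e)\|=\bigl\|\sum_k F_{1,k}x_n\otimes F_{2,k}e\bigr\|\le(\max_k\|F_{2,k}\|)\sum_k\|F_{1,k}x_n\|\to0$, contradicting $\|F(x_n\otimes e)\|\ge c_0$. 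Hence $\mathbf{F}_1$ is bounded below, and the symmetric argument, fixing a unit vector in $\mathcal{H}_1$ and varying the $\mathcal{H}_2$‑factor, shows $\mathbf{F}_2$ is bounded below.

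To finish part (i), I use that a bounded‑below $\mathbf{F}_j$ has the explicit left inverse $(\mathbf{F}_j^*\mathbf{F}_j)^{-1}\mathbf{F}_j^*$, where $\mathbf{F}_j^*\mathbf{F}_j=\sum_k F_{j,k}^*F_{j,k}$ is bounded below and hence invertible; reading off the block form of this left inverse yields $L_{j,k}=(\sum_l F_{j,l}^*F_{j,l})^{-1}F_{j,k}^*$, which satisfy \eqref{inv1}. Part (ii) then follows by duality: if $FR=I$ then $R^*F^*=I$, so $F^*=\sum_k F_{1,k}^*\otimes F_{2,k}^*$ is left‑invertible; applying part (i) to $F^*$ produces operators $M_{j,k}$ with $\sum_k M_{j,k}F_{j,k}^*=I_j$, and taking adjoints gives $\sum_k F_{j,k}M_{j,k}^*=I_j$, so $R_{j,k}=M_{j,k}^*$ satisfy \eqref{inv2}.

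The only genuinely delicate point is the middle paragraph: guaranteeing that left‑invertibility of the entangled operator $F$ descends to the individual factors. The simple‑tensor test settles this cleanly and, perhaps surprisingly, requires neither the minimality of \eqref{decomp} nor the finite‑Schmidt‑rank machinery of Section 2; the linear independence of the families $\{F_{j,k}\}_k$ plays no role, since the triangle inequality alone forces every component to shrink along the offending sequence.
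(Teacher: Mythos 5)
Your proof is correct, but it takes a genuinely different route from the paper's. The paper's argument is a direct, one-line construction: choosing $u_2,v_2\in\mathcal{H}_2$ with $\langle u_2,v_2\rangle=1$, it sets $L_{1,k}=V^{v_2}LU^{F_{2,k}(u_2)}$, built from the slice operators of Proposition \ref{prop} and the left inverse $L$ itself; the verification collapses to $\sum_{k=1}^r L_{1,k}F_{1,k}(x_1)=V^{v_2}LF(x_1\otimes u_2)=V^{v_2}(x_1\otimes u_2)=x_1$, and symmetrically for $j=2$. You never use $L$ beyond the fact that it makes $F$ bounded below: you reformulate \eqref{inv1} as left-invertibility of the column operator $\mathbf{F}_j$, prove $\mathbf{F}_j$ is bounded below by testing $F$ on simple tensors (your contradiction argument is sound, since the components $F_{j,k}$ are bounded by Definition \ref{Schmidt}), and then take the canonical left inverse $(\mathbf{F}_j^*\mathbf{F}_j)^{-1}\mathbf{F}_j^*$, which is legitimate because $\mathbf{F}_j^*\mathbf{F}_j=\sum_k F_{j,k}^*F_{j,k}$ is positive and bounded below, hence invertible. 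Both proofs hinge on evaluating $F$ at simple tensors, and both deduce (ii) from (i) by taking adjoints. The trade-off: the paper's construction is shorter and expresses the $L_{j,k}$ explicitly in terms of $L$, with the norm bound $\|L_{1,k}\|\le\|v_2\|\,\|L\|\,\|F_{2,k}(u_2)\|$ for free; yours avoids the slice-operator machinery of Proposition \ref{prop} entirely, produces $L_{j,k}$ that depend only on the components $F_{j,k}$ rather than on $L$, and isolates the exact hypothesis used (boundedness below of $F$). Your closing observation that neither minimality of the decomposition nor linear independence of $\{F_{j,k}\}_k$ is needed is accurate, though it is not a point of contrast: the paper's proof does not use them either.
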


\begin{proof}
\begin{enumerate}[(i)]
  \item Let $u_2,v_2\in \mathcal{H}_2$ for which we have $\langle u_2, v_2\rangle=1$. For each $k \in \{1,..,r\}$, we set
$L_{1,k}= V ^{v_2}LU^{F_{2,k}(u_2)}$. For each $x_1 \in \mathcal{H}_1$, we have
\begin{eqnarray*}
\sum_{k=1}^r L_{1,k} F_{1,k}(x_1) &=& \sum_{k=1}^r  V ^{v_2}LU^{F_{2,k}(u_2)} F_{1,k}(x_1) \\
&=& \sum_{k=1}^r  V ^{v_2}L(F_{1,k}(x_1) \otimes F_{2,k}(u_2) )\\
&=& V^{v_2}(x_1 \otimes u_2)= \langle u_2,v_2\rangle x_1 = x_1.
\end{eqnarray*}

Similarly, we can prove the other equality in \eqref{inv1} by letting   $u_1,v_1\in \mathcal{H}_1$ for which we have $\langle u_1, v_1\rangle=1$ and setting, for each $k \in \{1,..,r\}$, $L_{2,k}= V _{v_1}L U_{F_{1,k}(u_1)}$.
  \item This Statement is a consequence of Statement (ii) using adjoint operators.
\end{enumerate}
\end{proof}

\section{The Tensor Product of Sequences}

The following proposition includes some elementary properties of sequences in a separable Hilbert space.
\begin{prop}\label{sequences}
Let $\{e_n\}_{n >0}$ be an ONB of $\mathcal{H}$. For a sequence $\mathcal{F} = \{f_n\}_{n >0}$ in $\mathcal{H}$, we define
\begin{eqnarray}\label{operator}
  \forall f \in D(\mathcal{H}), \quad  F(f)=\sum_{n>0} \langle f, f_n \rangle e_n,
  \end{eqnarray}
where $D(\mathcal{H})=\{ f \in  \mathcal{H}: F(f) \in \mathcal{H}\}$. We write $F= Ass(\{e_n\}_{n >0},\mathcal{F})$ and read $F$  is associated to $\{e_n\}_{n >0}$ and $\mathcal{F}$.
\begin{enumerate}[(i)]
  \item $\mathcal{F}$ is a Bessel sequence if and only if $F$ is a bounded operator. In addition, $\|F \|^2$  is a Bessel bound for $\mathcal{F}$.
  \item $\mathcal{F}$ is a frame if and only if $F$ is a bounded operator and has a left inverse $L \in \mathcal{B}(\mathcal{H})$. In addition, $1/\|L \|^2$ is a lower frame bound for $\mathcal{F}$.
  \item $\mathcal{F}$ is a Riesz basis if and only if $F$ is an invertible bounded operator.
\end{enumerate}
\end{prop}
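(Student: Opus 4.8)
The plan is to reduce all three statements to a single norm identity coming from the orthonormality of $\{e_n\}_{n>0}$. First I would record the standard Cauchy criterion for orthonormal expansions: the partial sums $\sum_{n=1}^{N}\langle f,f_n\rangle e_n$ of \eqref{operator} converge in $\mathcal{H}$ if and only if $\sum_{n>0}|\langle f,f_n\rangle|^2<\infty$, so that $D(\mathcal{H})=\{f\in\mathcal{H}:\sum_{n>0}|\langle f,f_n\rangle|^2<\infty\}$ and, on this domain, $\|F(f)\|^2=\sum_{n>0}|\langle f,f_n\rangle|^2$. This identity is the engine of the whole proof. Statement (i) follows at once: if $\mathcal{F}$ is Bessel with bound $B$, the defining inequality forces $\sum_{n>0}|\langle f,f_n\rangle|^2\leq B\|f\|^2<\infty$ for every $f$, so $D(\mathcal{H})=\mathcal{H}$ and $\|F(f)\|^2\leq B\|f\|^2$, whence $F\in\mathcal{B}(\mathcal{H})$ with $\|F\|^2\leq B$; conversely, if $F$ is bounded then $\sum_{n>0}|\langle f,f_n\rangle|^2=\|F(f)\|^2\leq\|F\|^2\|f\|^2$, so $\mathcal{F}$ is Bessel and $\|F\|^2$ is a Bessel bound.

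For (ii) I would assume $\mathcal{F}$ is Bessel (equivalently, by (i), $F\in\mathcal{B}(\mathcal{H})$) and observe that the frame property is exactly the lower bound $A\|f\|^2\leq\|F(f)\|^2$, i.e.\ $F$ is bounded below. In the forward direction, being bounded below yields $\langle F^*F f,f\rangle=\|F(f)\|^2\geq A\|f\|^2$, so $F^*F\geq A\,I$ is invertible in $\mathcal{B}(\mathcal{H})$ and $L:=(F^*F)^{-1}F^*$ is a bounded left inverse of $F$. In the reverse direction, any left inverse $L\in\mathcal{B}(\mathcal{H})$ gives $\|f\|=\|LF(f)\|\leq\|L\|\,\|F(f)\|$, whence $\sum_{n>0}|\langle f,f_n\rangle|^2=\|F(f)\|^2\geq\|f\|^2/\|L\|^2$; together with the Bessel upper bound from (i) this shows $\mathcal{F}$ is a frame with lower frame bound $1/\|L\|^2$.

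Statement (iii) rests on the relation $f_n=F^*e_n$, valid whenever $F\in\mathcal{B}(\mathcal{H})$, which I would obtain from $\langle F(f),e_n\rangle=\langle f,f_n\rangle$ and the orthonormality of $\{e_n\}$. If $F$ is invertible then $F^*$ is invertible, $\mathcal{F}=\{F^*e_n\}_{n>0}$ is the image of the ONB $\{e_n\}$ under the invertible map $F^*$, and $F$ (being invertible) has a left inverse, so by (ii) $\mathcal{F}$ is a frame; hence $\mathcal{F}$ is a Riesz basis in the sense of Definition \ref{frame}. Conversely, a Riesz basis is in particular a frame, so $F$ is bounded and $f_n=F^*e_n$; writing $f_n=T g_n$ for an ONB $\{g_n\}$ and an invertible $T\in\mathcal{B}(\mathcal{H})$, the unitary $U$ determined by $Ue_n=g_n$ gives $F^*e_n=TUe_n$ on the basis $\{e_n\}$, hence $F^*=TU$ everywhere by continuity, so $F^*$ and therefore $F$ are invertible.

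I expect the only genuinely delicate points to be the convergence/domain equivalence underlying the norm identity, which must be handled through the Cauchy criterion for orthonormal expansions rather than assumed, and, in (ii), the passage from \emph{bounded below} to an honest bounded left inverse defined on all of $\mathcal{H}$ together with the extraction of the sharp constant $1/\|L\|^2$; once the identity $\|F(f)\|^2=\sum_{n>0}|\langle f,f_n\rangle|^2$ and the relation $f_n=F^*e_n$ are in hand, everything else is a direct transcription.
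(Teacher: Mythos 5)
Your proof is correct. Note that the paper itself offers no proof of Proposition \ref{sequences}: it is presented as a collection of elementary, standard facts, with only the surrounding remarks identifying $F^*(f)=\sum_{n>0}\langle f,e_n\rangle f_n$ as the preframe operator, $\{F^*(e_n)\}=\{f_n\}$, and $F^*F=S$ as the frame operator. Your argument is precisely the standard one that those remarks presuppose --- the norm identity $\|F(f)\|^2=\sum_{n>0}|\langle f,f_n\rangle|^2$ from orthonormality, the left inverse $(F^*F)^{-1}F^*$ built from the frame operator, and the relation $f_n=F^*e_n$ for the Riesz basis equivalence --- so it supplies exactly the missing details in the spirit the paper intends.
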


In Equality \eqref{operator}, the convergence is meant to be conditional. Therefore, the set of positive integers can be replaced with any other countable set of indices.

$F^*(f)=\sum_{n>0}  \langle f, e_n \rangle f_n$ is the so  called \emph{preframe operator}  \cite{Cas}. We then have   $\{F^*(e_n)\}= \{f_n \}$ and $F^*F=S$, where $S$ is the so called \emph{frame operator}.

From now on, $ \{e_{j,n}\}_{n >0}$ will be be an ONB of $\mathcal{H}_j$ for each $j \in \{1,\dots,d\}$.

\begin{thm}\label{Bessel}Let $\mathcal{F} =\{f_{m,n} = f_{1,1,m} \otimes f_{2,1,n} + \dots + f_{1,r,m} \otimes f_{2,r,n}\}_{m,n >0}$ in $\mathcal{H}= \mathcal{H}_1 \otimes \mathcal{H}_2$, where, for each $j \in \{1,2\}$,
$\big\{ \{f_{j,1,n} \}_{n >0}, \dots , \{f_{j,r,n}\}_{n >0} \big \}$ is a set of linearly independent   sequences in $\mathcal{H}_j$.  Let $F= Ass(\{e_{1,m} \otimes e_{2,n}\}_{m,n >0},\mathcal{F})$ and let $F_{j,k}= Ass(\{e_{j,n}\},\{f_{j,k,n} \})$, for each $(j,k) \in \{1,2\} \times \{1, \dots, r \}$.
\begin{enumerate}[(i)]
\item $\mathcal{F}$ is a Bessel sequence in $\mathcal{H}$ if and only if $\{f_{j,k,n} \}_{n >0}$ is a Bessel sequence in $\mathcal{H}_j$  for each $(j,k) \in \{1,2\} \times \{1, \dots, r \}$.
\item If $\mathcal{F}$ is a frame,  then  there is a set of operators $\{L_{1,k} \otimes L_{2,k}  \}_{k=1}^r  \subset \mathcal{B}(\mathcal{H})$ for which we have Equality\eqref{inv1}. In addition, the concatenated sequence $\{ f_{j,k,m}\}_{1\leq k \leq r, m>0}$ is a frame in $\mathcal{H}_j$  for each $j=1,2$.
 \item If $\mathcal{F}$ is a Riesz basis in $\mathcal{H}_1 \otimes \mathcal{H}_2$,  then, in addition to the conclusion in statement (ii),  there is a set of operators $\{R_{1,k} \otimes R_{2,k}  \}_{k=1}^r  \subset \mathcal{B}(\mathcal{H})$ for which we have Equality \eqref{inv2}.
\end{enumerate}
\end{thm}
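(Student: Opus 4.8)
The plan is to reduce everything to the operator identity $F = \sum_{k=1}^{r} F_{1,k}\otimes F_{2,k}$ and then feed it into the FSR machinery of Section 2. First I would establish this identity. Evaluating the analysis operator on a simple tensor and using $\langle g_1\otimes g_2, f_{1,k,m}\otimes f_{2,k,n}\rangle = \langle g_1, f_{1,k,m}\rangle\langle g_2, f_{2,k,n}\rangle$ gives $F(g_1\otimes g_2) = \sum_k F_{1,k}(g_1)\otimes F_{2,k}(g_2)$; more robustly, for any $f\in\mathcal{H}$ one checks $\langle F(f), e_{1,m}\otimes e_{2,n}\rangle = \langle f, f_{m,n}\rangle = \sum_k \langle f, f_{1,k,m}\otimes f_{2,k,n}\rangle$, and since $F_{j,k}^{*}(e_{j,m}) = f_{j,k,m}$ this identifies $F$ with $\sum_k F_{1,k}\otimes F_{2,k}$ as soon as the components are bounded. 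The easy direction of (i) is then immediate: if each $\{f_{j,k,n}\}$ is Bessel, each $F_{j,k}$ is bounded by Proposition \ref{sequences}(i), so $\sum_k F_{1,k}\otimes F_{2,k}$ is a bounded operator equal to $F$, whence $\mathcal{F}$ is Bessel.

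The converse direction of (i) is the step I expect to be the main obstacle, since a priori the $F_{j,k}$ are only densely defined while the machinery of Proposition \ref{prop} is stated for bounded operators. The idea is to apply the slicing operators of Proposition \ref{prop} directly to the \emph{bounded} operator $F$. For $u_2\in\mathcal{H}_2$ and a basis vector $e_{2,n_0}$, the composition $V^{e_{2,n_0}}FU^{u_2}$ is bounded (with $\|U^{u_2}\|=\|u_2\|$ and $\|V^{e_{2,n_0}}\|=1$), and a direct computation shows it is the operator associated to the sequence $\{\sum_k \overline{\langle u_2, f_{2,k,n_0}\rangle}\, f_{1,k,m}\}_m$, which is therefore Bessel by Proposition \ref{sequences}(i). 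Crucially, this computation uses only the scalars $\langle u_2, f_{2,k,n_0}\rangle$ and so never requires $F_{2,k}$ to be defined on $u_2$. The linear independence of $\{f_{2,k,n}\}_n$ over $k$ is exactly the statement that the vectors $(\langle u_2, f_{2,k,n_0}\rangle)_{k=1}^{r}$ span $\mathbb{C}^r$ as $(u_2,n_0)$ varies, so I can select $r$ pairs whose coefficient vectors form a basis and invert the resulting $r\times r$ matrix to write each $\{f_{1,k,m}\}_m$ as a finite linear combination of these Bessel sequences. Since a finite linear combination of Bessel sequences is Bessel (Minkowski's inequality applied to the analysis sums), every $\{f_{1,k,n}\}$ is Bessel; the symmetric argument with $V_{v_1}$ and $U_{u_1}$ handles the $j=2$ components.

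For (ii) and (iii) the structural work is already in place. Once (i) supplies boundedness, the independence of the sequences forces $\{F_{j,k}\}_{k}$ to be an independent family for each $j$, because a relation $\sum_k c_k F_{j,k}=0$ unwinds to $\sum_k \bar c_k f_{j,k,n}=0$ for every $n$; hence $\{F_{1,k}\otimes F_{2,k}\}$ is a FMS and, by Theorem \ref{thmFMS}, $F = \sum_k F_{1,k}\otimes F_{2,k}$ is a genuine Schmidt decomposition. If $\mathcal{F}$ is a frame, $F$ has a bounded left inverse by Proposition \ref{sequences}(ii), so Theorem \ref{inv}(i) produces $\{L_{1,k}\otimes L_{2,k}\}$ satisfying \eqref{inv1}; the relation $\sum_k L_{j,k}F_{j,k}=I_j$ then exhibits a bounded left inverse for the analysis operator $x\mapsto(F_{j,1}(x),\dots,F_{j,r}(x))$ of the concatenated sequence, whose squared norm is precisely $\sum_{k,m}|\langle x, f_{j,k,m}\rangle|^2$, so Proposition \ref{sequences}(ii) shows the concatenation is a frame. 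Finally, if $\mathcal{F}$ is a Riesz basis then $F$ is invertible by Proposition \ref{sequences}(iii) and so admits both a left and a right inverse; applying Theorem \ref{inv}(ii) to the right inverse yields $\{R_{1,k}\otimes R_{2,k}\}$ satisfying \eqref{inv2}, in addition to the frame conclusion inherited from (ii).
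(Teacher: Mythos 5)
Your proposal is correct, and for parts (ii) and (iii) it follows essentially the paper's own route (boundedness from (i), then Proposition \ref{sequences} combined with Theorem \ref{inv}); but your argument for the converse direction of (i), which is the heart of the theorem, is genuinely different from the paper's. The paper proceeds by truncation: it forms the finite-rank operators $F_N=\sum_{k=1}^r F^N_{1,k}\otimes F^N_{2,k}$, observes that $\mathrm{rank}_\otimes(F_N)\le r$ and that $F_N\to F$ strongly, invokes Theorem \ref{conv} to conclude that the bounded operator $F$ is itself FSR, takes a genuine Schmidt decomposition $F=\sum_{k=1}^s F_{1,k}\otimes F_{2,k}$ with bounded components, and then uses Theorem \ref{thmFMS} (to get $s=r$) together with Corollary \ref{unique} (applied to large finite truncations) to show that each $\{f_{j,k,n}\}_n$ is a finite linear combination of the Bessel sequences $\{F_{j,k}^*(e_{j,n})\}_n$. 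You instead slice the bounded operator $F$ directly: $V^{e_{2,n_0}}FU^{u_2}$ is bounded and is the analysis operator of $\{\sum_k \overline{\langle u_2,f_{2,k,n_0}\rangle}\,f_{1,k,m}\}_m$, so every such coefficient combination is Bessel; linear independence of the sequences $\{f_{2,k,n}\}_n$ is precisely the statement that the coefficient vectors $(\langle u_2,f_{2,k,n_0}\rangle)_{k=1}^r$ span $\mathbb{C}^r$ as $(u_2,n_0)$ varies (otherwise a nonzero annihilating functional would produce a nontrivial dependence $\sum_k \bar b_k f_{2,k,n}=0$ for all $n$), and inverting an $r\times r$ matrix recovers each $\{f_{1,k,m}\}_m$ as a finite linear combination of Bessel sequences. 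This is more elementary and self-contained: it bypasses Theorem \ref{conv}, Theorem \ref{thmFMS} and Corollary \ref{unique} entirely, avoids the limit argument, and gives explicit Bessel bounds (each sliced sequence has bound $\|u_2\|^2\|F\|^2$, so explicit bounds for the $\{f_{j,k,n}\}_n$ follow from the matrix inversion). What the paper's route buys in exchange is structural information your argument never produces, namely that $F$ has Schmidt rank exactly $r$ and that the components of any length-$r$ decomposition span the same operator spaces. One small remark: your FMS/Schmidt-decomposition step in part (ii) is harmless but unnecessary, since Theorem \ref{inv} as stated (and as proved) applies to any finite decomposition of $F$ into bounded tensor factors, which is all that is needed there.
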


\begin{proof}
\begin{enumerate}[(i)]
\item Obviously, if $\{f_{j,k,n} \}_{n >0}$ is a Bessel sequence in $\mathcal{H}_j$  for each $(j,k) \in \{1,2\} \times \{1, \dots, r \}$, then
$\mathcal{F}$ is a Bessel sequence in $\mathcal{H}_1 \otimes \mathcal{H}_2$. For the converse, we define the sequence
\begin{eqnarray}\label{3.1}
  \forall N>0, \quad F_{N} = \sum_{k=1}^r F^{N}_{1,k} \otimes F^{N}_{2,k},
\end{eqnarray}
where, for each $(j,k) \in \{1,2\} \times \{1, \dots, s \}$ and each $f \in \mathcal{H}_j$,
\begin{eqnarray*}
F^{N}_{j,k}(f) = \sum_{n=0}^N \langle f, f_{j,k,n} \rangle  e_{j,n}.
\end{eqnarray*}

We have $F,  F_N \in \mathcal{B}(\mathcal{H}_1  \otimes \mathcal{H}_2) $, $\|F_N \| \leq \|F\|$ for each $N>0$,  the sequence $ \{F_{N}\}_{N >0}$ strongly converges to $F$,  and $rank_{\otimes}(F_N) \leq r$ for each $N>0$. Using Theorem \ref{conv}, we then obtain $rank_{\otimes}(F)=s \leq r$, and so
\begin{eqnarray*}
  F&=&\sum_{k=1}^s  F_{1,k} \otimes F_{2,k},
\end{eqnarray*}
where, for each $j \in \{1,2\}$,  $\{ F_{j,1}, \dots F_{j,s} \}$ is a set of linearly independent bounded operators.

For each $(j,k) \in \{1,2\} \times \{1, \dots, s \}$, we define the sequence $g_{j,k,n }= F_{j,1}^*(e_{j,n})$. Therefore, for each $j \in \{1,2\}$,  $\big \{ \{g_{j,1,n }\}_{n>0}, \dots, \{g_{j,s,n }\}_{n>0} \big\} $ is a set of independent Bessel sequences. In addition, we have
\begin{eqnarray}\label{3.2}
  \forall N>0, \quad F_{N} = \sum_{k=1}^s G^{N}_{1,k} \otimes G^{N}_{2,k},
\end{eqnarray}
where, for each $(j,k) \in \{1,2\} \times \{1, \dots, s \}$ and each $f \in \mathcal{H}_1  \otimes \mathcal{H}_2$,
\begin{eqnarray*}
G^{N}_{j,k}(f)=\sum_{n=0}^N \langle f, g_{j,k,n} \rangle e_{j,n}.
\end{eqnarray*}

We can choose $N>0$  big enough to have independent each one of the sets of sequences $\big \{\{f_{j,1,n }\}_{n=1}^N, \dots, \{f_{j,r,n }\}_{n=1}^N \big \} $  and $\big \{ \{g_{j,1,n }\}_{n=1}^N, \dots, \{g_{j,s,n }\}_{n=1}^N \big \} $, where $j \in \{1,2\}$. Hence, Equalities \eqref{3.1} and \eqref{3.2} become Schmidt decompositions, and so, using Theorem \ref{thmFMS}, $r=s$.

Using Corollary \ref{unique},  the sequence  $\{f_{j,k,n }\}_{n=1}^N$ is a linear combination of the sequences $\{g_{j,1,n }\}_{n=1}^N, \dots, \{g_{j,r,n }\}_{n=1}^N $, for each $(j,k) \in \{1,2\} \times \{1, \dots, r \}$.  Since $N$ can be chosen as big as we want, we conclude  the sequence  $\{f_{j,k,n }\}_{n>0}$ is a linear combination of the sequences $\{g_{j,1,n }\}_{n>0}, \dots, \{g_{j,r,n }\}_{n>0} $, for each $(j,k) \in \{1,2\} \times \{1, \dots, r \}$. Consequently, we obtain the desired result.

\item  Using (i), $\{f_{j,k,n} \}$ is a Bessel sequence in $\mathcal{H}_j$  for each $(j,k) \in \{1,2\} \times \{1, \dots, r \}$, and so the concatenated sequence $\{ f_{j,k,n}\}_{1\leq k \leq r, n>0}$ is a Bessel sequence in $\mathcal{H}_j$  for each $j=1,2$. By Proposition \ref{sequences}, the operator
\begin{eqnarray*}
F=\sum_{k=1}^r F_{1,k} \otimes F_{2,k}
\end{eqnarray*}
is left invertible, and so, by Theorem \ref{inv}, there is a set of operators $\{L_{1,k} \otimes L_{2,k}  \}_{k=1}^r  \subset \mathcal{B}(\mathcal{H}_1\otimes \mathcal{H}_2)$ for which we have Equality \eqref{inv1}. Consequently, for each $j=1,2$, we have
\begin{eqnarray*}
\forall f \in \mathcal{H}_j, \quad \sum_{k=1}^r \|L_{j,k} \| \left( \sum_{n>0} |\langle f, f_{j,k,n} \rangle|^2  \right)^{1/2} \geq \|f\|,
\end{eqnarray*}
and so the concatenated sequence $\{ f_{j,k,n}\}_{1\leq k \leq r, m>0}$ is a frame.
 \item This statement is obtained using Theorem \ref{inv} (ii) and the two previous statements.
\end{enumerate}
\end{proof}

Now, we can prove Theorem \ref{Main}.
\begin{proof}
\begin{enumerate}[(i)]
  \item This statement is obtained by induction using Theorem \ref{Bessel} (i).
  \item On one hand, Statement (i) implies that $\{ f_{j,k,n}\}_{1\leq k \leq r, n>0}$ is a Bessel sequence for each $j=1,\dots,d$, and then so is the sequence
  \begin{eqnarray}\label{seq}
\bigotimes_{j=1}^{d}  \{ f_{j,k,n_j}\}_{1\leq k \leq r, n_j>0}.
\end{eqnarray}
  On the other hand, Theorem \ref{Bessel} implies that the sequences $\{ f_{1,k,n}\}_{1\leq k \leq r, n>0}$ and $\{  \bigotimes_{j=2}^{d} f_{j,k,n_j} \}_{1\leq k \leq r, n_2,\dots, n_d >0}$  are  frames. Hence, by Theorem \ref{frame2}, the sequence $\{  \bigotimes_{j=1}^{d} f_{j,k,n_j} \}_{1\leq k \leq r, n_2,\dots, n_d >0}$ is a frame. The fact that the last sequence is included in Sequence \ref{seq} implies that Sequence \ref{seq} is also a frame.

 Using Theorem \ref{frame2} one more time, we conclude that $\{ f_{j,k,n}\}_{1\leq k \leq r, n>0}$ is a frame for each $j\in \{1,\dots,d\}$.
\end{enumerate}
\end{proof}

\begin{cor}\label{2sequences}
 Let $d >1$. For each $j \in \{1,\dots, d\}$, let $ \{f_{j,1,n} \}$ and $ \{f_{j,2,n}\}$ be  two linearly independent sequences in $\mathcal{H}_j$. Let $E_k= \big \{ \{f_{j,k,n} \} \big\}_{j=1}^{d}$, where $k=1,2$. Let $\mathcal{F}=\mathcal{F}_1 + \mathcal{F}_2$, where  $\mathcal{F}_k=\big\{\bigotimes_{j=1}^{d} f_{j,k,n} \big\}$ for each $k=1,2$.
 \begin{enumerate}[(i)]
  \item If $\mathcal{F}$ is a frame, then $\mathcal{F}_1$ or $\mathcal{F}_2$ is a frame or there is an index $i \in \{1,\dots,d\}$ such that $E_k \setminus \big \{ \{f_{i,k,n} \} \big\} $ is a set of frames for each $k=1,2$.
  \item If $\mathcal{F}$ is a Riesz basis and $E_1 \times E_2$ is a set of frames, then $\mathcal{F}_1$ or $\mathcal{F}_2$ is a Riesz basis or there is an index $i \in \{1,\dots,d\}$ such that $E_k \setminus \big \{ \{f_{i,k,n} \} \big\} $ is a set of Riesz bases for each $k=1,2$.
\end{enumerate}
 \end{cor}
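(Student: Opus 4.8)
The plan is to reduce each part to a single ``injection criterion'' for failure, and then to a short combinatorial argument. Throughout write $\vec n=(n_1,\dots,n_d)$, so that $f_{\vec n}=\sum_{k=1}^{2}\bigotimes_{j=1}^{d}f_{j,k,n_j}$, and note that since $\mathcal F$ is a frame in both statements, Theorem \ref{Main}(i) makes every $\{f_{j,k,n}\}$ a Bessel sequence, say with bound $B_{j,k}$. The criterion I would first establish is: \emph{if there is an injection $\sigma\colon\{1,2\}\to\{1,\dots,d\}$ such that $\{f_{\sigma(k),k,n}\}_n$ is not a frame for each $k\in\{1,2\}$, then $\mathcal F$ is not a frame.} To prove it I would use normalized test vectors $g=\bigotimes_{j=1}^{d}h_j$ with each $\|h_j\|=1$, for which
\[
\langle g,f_{\vec n}\rangle=\sum_{k=1}^{2}\prod_{j=1}^{d}\langle h_j,f_{j,k,n_j}\rangle .
\]
Applying $|z_1+z_2|^2\le 2(|z_1|^2+|z_2|^2)$ and summing over $\vec n$ factorizes each summand over $j$:
\[
\sum_{\vec n}|\langle g,f_{\vec n}\rangle|^2\le 2\sum_{k=1}^{2}\prod_{j=1}^{d}\Big(\sum_{n}|\langle h_j,f_{j,k,n}\rangle|^2\Big).
\]
Because $\sigma$ is injective, $h_{\sigma(1)}$ and $h_{\sigma(2)}$ sit in different factors and can be chosen independently; as $\{f_{\sigma(k),k,n}\}$ is Bessel but not a frame, its lower bound is $0$, so $h_{\sigma(k)}$ can be taken with $\sum_n|\langle h_{\sigma(k)},f_{\sigma(k),k,n}\rangle|^2$ arbitrarily small, while every other factor is bounded by $B_{j,k}$. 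Hence the right-hand side is arbitrarily small with $\|g\|=1$, and the lower frame bound fails.

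Next I would prove statement (i) by contraposition. Set $A=\{j:\{f_{j,1,n}\}\text{ is not a frame}\}$ and $B=\{j:\{f_{j,2,n}\}\text{ is not a frame}\}$. By Theorem \ref{frame2}, $\mathcal F_1$ is a frame iff $A=\emptyset$, $\mathcal F_2$ is a frame iff $B=\emptyset$, and the existence of an index $i$ with $E_k\setminus\{\{f_{i,k,n}\}\}$ a set of frames for each $k$ is precisely $A\cup B\subseteq\{i\}$. So if all three conclusions fail, then $A\neq\emptyset$, $B\neq\emptyset$ and $|A\cup B|\ge 2$. A one-line check then yields $a\in A$, $b\in B$ with $a\neq b$ (were every element of $A$ equal to every element of $B$, then $A=B$ would be a singleton and $|A\cup B|=1$). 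Taking $\sigma(1)=a$, $\sigma(2)=b$ gives an injection as in the criterion, so $\mathcal F$ is not a frame, which is the contrapositive of (i).

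For statement (ii) I would prove the analogous criterion for Riesz bases: \emph{if every $\{f_{j,k,n}\}$ is a frame and there is an injection $\sigma$ with $\{f_{\sigma(k),k,n}\}$ not a Riesz basis for each $k$, then $\mathcal F$ is not a Riesz basis.} Here I use that $\mathcal F$ is a Riesz basis iff its operator $F=\sum_{k}\bigotimes_j F_{j,k}$ is invertible (Proposition \ref{sequences}(iii)), so it suffices to produce $\Phi\neq 0$ with $F^{*}\Phi=0$, since then $F$ is not surjective, hence not invertible. A frame that is not a Riesz basis has $F_{j,k}$ left-invertible but not surjective, so $\ker F_{j,k}^{*}=(\mbox{range }F_{j,k})^{\perp}\neq\{0\}$; pick $0\neq g_{\sigma(k),k}$ there. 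Setting $\phi_{\sigma(k)}=g_{\sigma(k),k}$ (non-conflicting because $\sigma$ is injective) and $\phi_j$ any unit vector for $j$ outside the image of $\sigma$, the simple tensor $\Phi=\bigotimes_j\phi_j\neq 0$ has, for each $k$, its $\sigma(k)$-th factor killed, so $\bigotimes_j F_{j,k}^{*}\phi_j=0$ and thus $F^{*}\Phi=0$. Statement (ii) then follows by the same contraposition as (i), with $A,B$ redefined by ``is not a Riesz basis'' and Theorem \ref{frame2} applied in its Riesz-basis form; the hypothesis that $E_1\times E_2$ is a set of frames is exactly what lets this criterion apply.

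The main obstacle is the frame criterion itself, namely forcing the cross terms to vanish. The factorization over $j$ after $|z_1+z_2|^2\le 2(|z_1|^2+|z_2|^2)$, combined with the fact that injectivity of $\sigma$ places the two ``small'' factors in distinct tensor slots (so their smallness can be arranged simultaneously), is precisely what makes this work; the combinatorial step producing $a\neq b$ and the translation ``frame but not Riesz basis $\Rightarrow$ nontrivial $\ker F_{j,k}^{*}$'' are then routine.
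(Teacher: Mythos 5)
Your proof is correct, and it takes a genuinely different route from the paper's in both parts. For part (i), the paper first treats $d=2$ at the operator level: assuming (say) $\{f_{2,2,n}\}$ is not a frame, it picks unit vectors $x_{2,n}$ whose images under the corresponding component operator tend to $0$, feeds $x_1\otimes x_{2,n}$ into the lower bound of $F=F_{1,1}\otimes F_{2,1}+F_{1,2}\otimes F_{2,2}$, and concludes that $F_{1,1}$ is bounded below; the case $d>2$ is then reduced to $d=2$ by grouping tensor slots and invoking Theorem \ref{frame2} repeatedly. Your injection criterion proves the same dichotomy for all $d$ at once by purely scalar estimates (the bound $|z_1+z_2|^2\le 2(|z_1|^2+|z_2|^2)$ followed by Tonelli factorization of the sums over $\vec n$), and your bookkeeping with the sets $A$ and $B$ makes the three-way disjunction in the statement explicit, a combinatorial step the paper leaves implicit. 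For part (ii) the divergence is larger: the paper dualizes, applying part (i) to the adjoint sequence $\mathcal{F}^{*}$ built from the vectors $f^{*}_{j,k,n}=F_{j,k}(e_{j,n})$, and uses the hypothesis that $E_1\times E_2$ is a set of frames to upgrade the resulting frame conclusions about $\mathcal{F}^{*}$ into Riesz-basis conclusions about $\mathcal{F}$; you instead exhibit a nonzero vector in $\ker F^{*}$ directly, by tensoring vectors orthogonal to the closed proper ranges of the two crossed operators $F_{\sigma(k),k}$, so that $F$ fails to be surjective. Both arguments are valid: yours is self-contained, uniform in $d$, and avoids the dual-sequence detour, while the paper's leverages Theorem \ref{frame2} and duality to keep the written proof short. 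One small logical point: your criterion in (i) needs Bessel bounds on \emph{all} components, which are only available from Theorem \ref{Main}(i) once $\mathcal{F}$ is assumed to be a frame; so your argument is really a proof by contradiction under the hypothesis that $\mathcal{F}$ is a frame rather than a clean contraposition. This is harmless, but the write-up should say so.
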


\begin{proof} Let $F_{j,k}= Ass(\{e_{j,n}\},\{f_{j,k,n} \}), \forall (j,k) \in \{1, \dots , d \} \times \{1,2\}$. Therefore
\begin{eqnarray*}
\forall k=1,2, \quad F_k= Ass(\{\bigotimes_{j=1}^{d}e_{j,n_j} \},\mathcal{F}_k) =\bigotimes_{j=1}^{d} F_{j,k}
\end{eqnarray*}
and $F= Ass(\{\bigotimes_{j=1}^{d}e_{j,n_j} \},\mathcal{F})= F_1 + F_2$.

We assume that $\mathcal{F}$ is a frame. Therefore, by Theorem \ref{Bessel}, $ \{f_{j,k,n} \}$ is a Bessel sequence for each $(j,k) \in \{1, \dots, d \} \times \{1, 2 \}$.
\begin{enumerate}[(i)]
  \item a. We first prove the statement for $d=2$. Assume that $ \{f_{2,2,m} \}$ is not a frame. Therefore, there is a sequence $\{x_{2,n}\}$ in $\mathcal{H}_2$ such that $\| x_{2,n}\| =1, \forall n>0$ and the sequence $\{F_{2,1}(x_{2,n})\}_{n>0}$ converges to zero.  The fact that $\mathcal{F}$ is a frame implies the inequality
\begin{eqnarray*}
\forall x_1 \in \mathcal{H}_1, \quad  \|F_{1,1}(x_1) \otimes  F_{2,1}( x_{2,n}) +   F_{1,2}(x_1) \otimes  F_{2,2}(x _{2,n}) \| \geq \sqrt{A}\| x_1 \|,
\end{eqnarray*}
where $A$ is the lower frame bound of $\mathcal{F}$. Therefore
\begin{eqnarray*}
\forall x_1 \in \mathcal{H}_1, \quad  \| F_{1,1}(x_1)\| \geq \frac{\sqrt{A}\| x_1 \|}{\| F_{2,1} \|},
\end{eqnarray*}
and so $ \{f_{1,1,m} \}_{m >0}$ is a frame. Similarly, if $ \{f_{1,2,m} \}_{m >0}$ is not a frame, then $ \{f_{2,1,m} \}_{m >0}$ is a frame. Hence,  we obtain the desired result for $d=2$.

b. We now assume that $d>2$. Assume that $ \{f_{i,2,m} \}$ is not a frame for some $i \in \{1, \dots, d\}$. If $i <d$, then, by Theorem \ref{frame2}, the  sequence $\bigotimes_{j=1}^{i} f_{j,2,n}$ is not a frame, and so using (a), the  sequence $\bigotimes_{j=i+1}^{d} f_{j,1,n}$ is a frame. And if $i >1$, then, by Theorem \ref{frame2}, the  sequence $\bigotimes_{j=i}^{d} f_{j,2,n}$ is not a frame, and so using (a), the  sequence $\bigotimes_{j=1}^{i-1} f_{j,1,n}$ is a frame. In any case, by Theorem \ref{frame2}, we conclude that $f_{j,1,n}$ is a frame for each $j \in \{1, \dots, d \} \setminus \{i\}$.
Hence, we can obtain the desired result by using Theorem \ref{frame2} once again.

\item Since $\mathcal{F}$ is a Riesz basis, the sequence $\mathcal{F}^{*}=\big \{  \bigotimes_{j=1}^{d} f_{1,1,n_j}^{*} + \bigotimes_{j=1}^{d} f_{1,2,n_j}^{*} \big\}$ is a frame, where $f_{j,k,n}^{*}= F_{j,k}(e_{j,n})$ for each $(j,k) \in \{1, \dots , d \} \times \{1,2\}$. Using (ii) with $\mathcal{F}^{*}$ and the fact that $E_1 \times E_2$ is a set of frames, we can obtain what we want.
\end{enumerate}
\end{proof}

\begin{rem} Assume that $\mathcal{F}=\bigotimes_{j=1}^{d} f_{j,1,n} +\bigotimes_{j=1}^{d} f_{j,2,n}$ is a frame. Using Theorem \ref{frame2} and Corollary \ref{2sequences}, the set of sequences $E_1 \cup E_2$ includes at least $d$ frames.
\end{rem}

\section{Gabor Systems generated by  Finite Rank Windows}

Using Theorem \ref{Main}, we can deduce the following proposition for Gabor systems.

\begin{prop}\label{BesselGabor} Let $d>1$. Let $g$ be a rank $r$ function in $L^2(\mathbb{R}^{d})$, i.e.,
\begin{eqnarray}\label{function}
g = \sum_{k=1}^{r} \bigotimes_{j=1}^{d}g_{j,k},
\end{eqnarray}
where, $\forall j \in \{1, \dots, d\}$,   $ \{g_{j,1}, \dots, g_{j,d} \}$ is a set of independent functions in $L^2(\mathbb{R})$. $\mathcal{G}(g,\prod _{j=1}^{d}\Lambda_{j})$ is a Bessel sequence in $L^2(\mathbb{R}^{d})$ if and only if $\mathcal{G}(g_{j,k},\Lambda_{j})$ is a Bessel sequence in $L^2(\mathbb{R})$ for each $(j,k) \in \{1,\dots,d\} \times \{1,\dots,r\}$.
\end{prop}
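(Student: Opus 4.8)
The plan is to reduce the proposition to Theorem \ref{Main}(i) by recognizing that, under the standard identification $L^2(\mathbb{R}^d) = \bigotimes_{j=1}^d L^2(\mathbb{R})$ given by $\bigotimes_{j=1}^d f_j \mapsto \prod_{j=1}^d f_j(x_j)$, the Gabor system $\mathcal{G}(g, \prod_{j=1}^d \Lambda_j)$ is exactly a sequence of the form \eqref{decomp1} whose $(j,k)$-component is the one-dimensional Gabor system $\mathcal{G}(g_{j,k}, \Lambda_j)$. Accordingly I would set $\mathcal{H}_j = L^2(\mathbb{R})$, so that $\mathcal{H} = \bigotimes_{j=1}^d \mathcal{H}_j = L^2(\mathbb{R}^d)$.

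The key computation is that translations and modulations factor over the tensor product. Writing $a = (a_1, \dots, a_d)$ and $b = (b_1, \dots, b_d)$, for any simple tensor $\bigotimes_{j=1}^d h_j$ one has
\[
T_a \Big( \bigotimes_{j=1}^d h_j \Big) = \bigotimes_{j=1}^d T_{a_j} h_j, \qquad M_b \Big( \bigotimes_{j=1}^d h_j \Big) = \bigotimes_{j=1}^d M_{b_j} h_j,
\]
because $g \mapsto g(x-a)$ and $g \mapsto e^{2\pi i \langle x, b\rangle} g(x)$ both act coordinatewise on a product function. Applying this to \eqref{function} by linearity of $M_b T_a$ gives, for each $(a,b) \in \prod_{j=1}^d \Lambda_j$,
\[
M_b T_a g = \sum_{k=1}^r \bigotimes_{j=1}^d \big( M_{b_j} T_{a_j} g_{j,k} \big).
\]
Enumerating $\Lambda_j$ by an index $n_j$, so that $n_j \leftrightarrow (a_j, b_j) \in \Lambda_j$, and setting $f_{j,k,n_j} = M_{b_j} T_{a_j} g_{j,k}$, this exhibits $\mathcal{G}(g, \prod_{j=1}^d \Lambda_j)$ precisely in the form \eqref{decomp1}, with $(j,k)$-component sequence equal to $\mathcal{G}(g_{j,k}, \Lambda_j)$.

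Before invoking Theorem \ref{Main}, I must check its standing hypothesis: for each fixed $j$, the sequences $\{f_{j,1,n}\}, \dots, \{f_{j,r,n}\}$ are linearly independent. Suppose $\sum_{k=1}^r c_k f_{j,k,n} = 0$ for all $n$. Fixing any single index $(a_j, b_j) \in \Lambda_j$ (the index set is nonempty) and applying the invertible operator $(M_{b_j} T_{a_j})^{-1} = T_{-a_j} M_{-b_j}$ yields $\sum_{k=1}^r c_k g_{j,k} = 0$, whence $c_1 = \dots = c_r = 0$ by the assumed independence of the generating functions $g_{j,1}, \dots, g_{j,r}$. Thus the independence hypothesis holds.

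With the hypothesis in place, Theorem \ref{Main}(i) applies directly and gives the equivalence: $\mathcal{G}(g, \prod_{j=1}^d \Lambda_j)$ is a Bessel sequence in $L^2(\mathbb{R}^d)$ if and only if each $\mathcal{G}(g_{j,k}, \Lambda_j)$ is a Bessel sequence in $L^2(\mathbb{R})$. I expect no deep obstacle here; the substance lies entirely in the two preparatory points, namely the coordinatewise factorization that rewrites the Gabor system in the form \eqref{decomp1}, and the verification of the linear independence that Theorem \ref{Main} presupposes. The factorization identities themselves are routine and I would not expand them in detail.
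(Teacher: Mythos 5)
Your proposal is correct and follows exactly the route the paper intends: the paper states Proposition \ref{BesselGabor} as an immediate deduction from Theorem \ref{Main}, and your argument supplies precisely the details of that deduction (the coordinatewise factorization of $M_bT_a$ over simple tensors and the verification of the linear independence hypothesis via invertibility of $M_{b_j}T_{a_j}$). No gap; this is the paper's proof, written out in full.
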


\begin{rem}\label{rBessel}
   If $\mathcal{F}$ is a sum  of $r$ independent sequences in $\mathcal{H}$,  the fact that $\mathcal{F}$  is a Bessel sequence does not necessary implies that each term in $\mathcal{F}$ is a Bessel sequence.  In addition, if $\mathcal{F}$ is assumed to be a Bessel sequence, it is extremely difficult to prove whether each term in $\mathcal{F}$ is a Bessel sequence or not. In particular, the one dimensional version of Proposition \ref{BesselGabor} does not always hold.
\end{rem}

The following density theorem for general rectangular Gabor systems is a particular case of the density theorem for lattices that is proved in \cite{Chr}.

\begin{thm}\label{Density} Let $g \in L^2(\mathbb{R}^d)$ and   let $(a,b) \in [0, \infty)^{2d} \setminus \{0\} $.
\begin{enumerate}[(i)]
  \item If $\mathcal{G}(g,a, b)$ is a frame, then $0< \prod_{j=1}^d a_jb_j \leq 1$.
  \item $\mathcal{G}(g,a, b)$ is a Riesz basis if and only if $\mathcal{G}(g,a, b)$ is a frame and $ \prod_{j=1}^d a_jb_j = 1$.
\end{enumerate}
\end{thm}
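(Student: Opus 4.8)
The plan is to recognize $\mathcal{G}(g,a,b)$ as a lattice Gabor system and then read off both assertions from the lattice density theorem of \cite{Chr}. First I would rewrite $\mathcal{G}(g,a,b)=\mathcal{G}(g,\Lambda)$ with $\Lambda=\mathbb{Z}a\times\mathbb{Z}b\subset\mathbb{R}^d\times\mathbb{R}^d$, and then split the argument according to whether every entry $a_j,b_j$ is strictly positive or at least one of them vanishes.

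Suppose first that $a_j>0$ and $b_j>0$ for all $j$. Then $\Lambda$ is a full-rank lattice in $\mathbb{R}^{2d}$, generated by the $2d$ vectors $(a_je_j,0)$ and $(0,b_je_j)$, $j=1,\dots,d$; the associated generator matrix is the block-diagonal matrix $\mathrm{diag}(a_1,\dots,a_d,b_1,\dots,b_d)$, so its covolume is $\prod_{j=1}^d a_jb_j>0$. The lattice density theorem of \cite{Chr} now yields both claims at once: a frame forces covolume $\le 1$, that is $\prod_{j=1}^d a_jb_j\le 1$, which is (i) (the strict positivity being automatic in this case), while $\mathcal{G}(g,a,b)$ is a Riesz basis exactly when it is a frame of covolume $1$, i.e. $\prod_{j=1}^d a_jb_j=1$, which is (ii).

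It remains to dispose of the degenerate case, where some $a_j=0$ or $b_j=0$ and hence $\prod_{j=1}^d a_jb_j=0$; here I must rule out both the frame and the Riesz-basis property, and this is where I expect the main difficulty, since the clean lattice theorem presumes a full-rank lattice. The key observation is that a vanishing coordinate confines $\Lambda$ to a proper subspace of $\mathbb{R}^{2d}$, so $\mathrm{rank}(\Lambda)<2d$. Consequently $\#(\Lambda\cap B_R)$ grows only like $R^{\mathrm{rank}(\Lambda)}$ whereas $|B_R|$ grows like $R^{2d}$, so the lower Beurling density satisfies $D^{-}(\Lambda)=0<1$. By the necessary-density direction of the density theorem in \cite{Chr} (in its Beurling-density form), this precludes $\mathcal{G}(g,a,b)$ from being a frame for any $g$, establishing the strict inequality $0<\prod_{j=1}^d a_jb_j$ in (i) and, since a Riesz basis is a fortiori a frame, closing (ii) as well. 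A direct alternative for this case is to note that a time-frequency shift family supported on a proper subspace of phase space cannot be complete in $L^2(\mathbb{R}^d)$, so that the lower frame bound must fail; I would nonetheless keep the density argument as the primary route, as it is the statement literally available in \cite{Chr}.
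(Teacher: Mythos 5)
Your proposal is correct and takes essentially the same route as the paper: the paper gives no independent argument at all, merely observing that the statement is a particular case of the lattice density theorem proved in \cite{Chr}, which is precisely the reduction you carry out (identify $\mathbb{Z}a\times\mathbb{Z}b$ as a lattice of covolume $\prod_{j=1}^{d}a_jb_j$ and read off both parts). Your explicit handling of the degenerate case where some $a_j=0$ or $b_j=0$, via the vanishing lower Beurling density, is a detail the paper leaves implicit, but it still rests on the same cited theorem rather than on any new idea.
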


For the rest of this section, we assume that $(a,b) \in (0, \infty)^{d} \times (0, \infty)^{d}$.

On the one hand, if $ \prod_{j=1}^d a_jb_j \leq 1$, then  there is $g \in L^2(\mathbb{R}^d)$ for which $\mathcal{G}(g,a, b)$ is a frame \cite{Bek}. On the other hand, Theorem \ref{frame2} implies the following.
\begin{quote}
  Let $g$ be a one rank function in $L^2(\mathbb{R}^{d})$. If $\mathcal{G}(g,a, b)$ is a frame, then $ a_jb_j \leq 1,\forall j=1,\dots,d$.
\end{quote}
The last result and Proposition \ref{BesselGabor} let us conjecture for finite rank functions $g \in L^2(\mathbb{R}^{d})$:  $\mathcal{G}(g,a, b)$ is a frame implies sharper than the condition $ \prod_{j=1}^d a_jb_j \leq 1$ or maybe the sharpest condition that is $ a_jb_j \leq 1,\forall j=1,\dots,d$.

The following proposition supports our conjecture in case the rank of $g$ is $2$.
\begin{prop}\label{Gabor2} Let $d>1$. For each $j \in \{1,\dots,d \}$, let $g_{j,1}, g_{j,2}\in L^2(\mathbb{R})$ be two independent functions. Assume that $\mathcal{G}(g=g_1+g_2,a, b)$ is a frame, where
$ g_k=\bigotimes_{j=1}^{d} g_{j,k}$ for $k=1,2$.
\begin{enumerate}[(i)]
  \item $\mathcal{G}(g_1,a, b)$ or $\mathcal{G}(g_2,a, b)$ is a frame or  there is an index $i \in \{1, \dots, d \}$ such that $\big \{ \mathcal{G}(g_{j,k},a, b) \big \}_{j=1, j\neq i}^d$ is a set of frames for each $k=1,2$. In any case, the condition $a_jb_j \leq 1$ holds for at least $(d-1)$ indices $j \in \{1, \dots, d \}$.
  \item If $\mathcal{G}(g,a, b)$ is a Riesz basis, then $\mathcal{G}(g_1,a, b)$ or $\mathcal{G}(g_2,a, b)$ is a Riesz basis or there is an index $i \in \{1, \dots, d \}$ such that $\big \{ \mathcal{G}(g_{j,k},a, b) \big \}_{j=1, j\neq i}^d$ is a set of Riesz bases for each $k=1,2$.
\end{enumerate}
\end{prop}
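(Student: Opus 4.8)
The plan is to notice that $\mathcal{G}(g,a,b)$ is exactly a sum of two tensor products of one-variable Gabor systems, and then to feed this into Corollary~\ref{2sequences} with $\mathcal{H}_j=L^2(\mathbb{R})$ for every $j$. The enabling identity is the coordinatewise factorization of a time-frequency shift on a rectangular lattice: for $g_k=\bigotimes_{j=1}^d g_{j,k}$ one has $M_{nb}T_{ma}g_k=\bigotimes_{j=1}^d M_{n_jb_j}T_{m_ja_j}g_{j,k}$, so that $\mathcal{G}(g_k,a,b)=\bigotimes_{j=1}^d \mathcal{G}(g_{j,k},a_j,b_j)$, and by linearity of $M_{nb}T_{ma}$ we get $\mathcal{G}(g,a,b)=\mathcal{G}(g_1,a,b)+\mathcal{G}(g_2,a,b)$. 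Writing $\{f_{j,k,n}\}=\mathcal{G}(g_{j,k},a_j,b_j)$, this is precisely the sequence $\mathcal{F}=\mathcal{F}_1+\mathcal{F}_2$ of Corollary~\ref{2sequences}. Before applying the corollary I would discharge its independence requirement: a relation $c_1\{f_{j,1,n}\}+c_2\{f_{j,2,n}\}=0$, evaluated at the lattice point $(0,0)$, reads $c_1 g_{j,1}+c_2 g_{j,2}=0$, which forces $c_1=c_2=0$ since $g_{j,1},g_{j,2}$ are independent. Hence $\{f_{j,1,n}\}$ and $\{f_{j,2,n}\}$ are linearly independent for each $j$, and $\mathcal{G}(g,a,b)$ meets the hypotheses of Corollary~\ref{2sequences}.

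The first sentence of part~(i) is then a direct transcription of Corollary~\ref{2sequences}(i) through this dictionary. To obtain the density count I would push each branch of the resulting trichotomy through Theorem~\ref{frame2} together with the one-variable density theorem, namely Theorem~\ref{Density}(i) with $d=1$. If $\mathcal{G}(g_1,a,b)$ or $\mathcal{G}(g_2,a,b)$ is a frame, then by Theorem~\ref{frame2} each factor $\mathcal{G}(g_{j,k},a_j,b_j)$ of that product is a frame, so Theorem~\ref{Density}(i) yields $a_jb_j\le 1$ for all $d$ indices. In the remaining branch there is an index $i$ for which $\{\mathcal{G}(g_{j,k},a_j,b_j)\}_{j\ne i}$ is a set of frames, so $a_jb_j\le 1$ for the $d-1$ indices $j\ne i$. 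In every case at least $d-1$ indices satisfy $a_jb_j\le 1$.

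For part~(ii), since a Riesz basis is a frame I would first run part~(i) to land in one of three branches, and then upgrade ``frame'' to ``Riesz basis'' using the critical-density identity $\prod_{j=1}^d a_jb_j=1$, which holds by Theorem~\ref{Density}(ii). In the branch where a full tensor factor $\mathcal{G}(g_k,a,b)$ is a frame, Theorem~\ref{frame2} makes all its one-variable factors frames, so all $a_jb_j\le 1$; combined with $\prod_j a_jb_j=1$ this forces $a_jb_j=1$ for every $j$, whence each factor is a frame at critical density and so a Riesz basis, and therefore $\mathcal{G}(g_k,a,b)$ is a Riesz basis by Theorem~\ref{frame2} --- exactly the first two alternatives of the conclusion. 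The main obstacle is the third branch, where only the $d-1$ factors with $j\ne i$ are known to be frames: to reach the conclusion one must show these are in fact Riesz bases, i.e.\ $a_jb_j=1$ for $j\ne i$, but the leftover coordinate could a priori satisfy $a_ib_i>1$, a regime in which no one-variable Gabor system is a frame and in which the bookkeeping with $\prod_j a_jb_j=1$ no longer pins the densities down. This is precisely the situation handled by Corollary~\ref{2sequences}(ii) under its extra hypothesis that $E_1\times E_2$ is a set of frames; supplying or circumventing that hypothesis in the Gabor setting is where the real work of part~(ii) concentrates.
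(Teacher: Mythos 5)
Your treatment of part (i) is correct and is exactly the paper's route: the paper proves (i) by citing Corollary \ref{2sequences}(i) for the trichotomy and Theorem \ref{Density}(i) for the density count, and your coordinatewise factorization of $M_{nb}T_{ma}$ together with the independence check at the lattice point $(0,0)$ supplies precisely the dictionary that the paper leaves implicit. (A small shortcut in the first two branches: once $\mathcal{G}(g_k,a,b)$ is a frame, Theorem \ref{Density}(i) applied in dimension $d$ already gives $\prod_j a_jb_j\le 1$; your detour through Theorem \ref{frame2} is equally valid and gives the stronger coordinatewise bound.)

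For part (ii) you have reproduced the paper's strategy faithfully --- the paper's entire proof is the sentence ``Using Statement (i) and Theorem \ref{Density}(ii), we can obtain Statement (ii)'' --- and your handling of the first two branches (a frame with $\prod_j a_jb_j=1$ is a Riesz basis, and critical density in each coordinate upgrades the one-variable factors) is exactly what that sentence amounts to. The obstacle you flag in the third branch is genuine, and the paper does not resolve it: when only the systems $\mathcal{G}(g_{j,k},a_j,b_j)$ with $j\neq i$ are known to be frames, the available constraints are $a_jb_j\le 1$ for $j\neq i$ and $\prod_{j}a_jb_j=1$, which are compatible with $a_ib_i>1$ and some $a_jb_j<1$; in that configuration the frames with $j\ne i$ need not be Riesz bases, so none of the three alternatives in (ii) is reached. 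Closing this branch would require knowing that a frame $\mathcal{G}(g_1+g_2,a,b)$ forces $a_jb_j\le 1$ for \emph{all} $j$ --- which is exactly the statement the paper labels a conjecture immediately before this proposition --- or else the extra hypothesis of Corollary \ref{2sequences}(ii), that every $\mathcal{G}(g_{j,k},a_j,b_j)$ is a frame, which is not assumed in Proposition \ref{Gabor2}(ii) and hence cannot be invoked. So your proposal is not weaker than the paper's argument: parts (i) and the first two branches of (ii) match the paper, and the point at which you stop is a real gap in the paper's own one-line proof of (ii), not a deficiency peculiar to your approach.
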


\begin{proof}
\begin{enumerate}[(i)]
  \item The first part of this Statement is a consequence of Corollary \ref{2sequences} (i);  and the second part is obtained using Theorem \ref{Density} (i).
  \item Using Statement (i) and Theorem \ref{Density}.(ii), we can obtain Statement (ii).
\end{enumerate}

\end{proof}

In the followings, we consider finite rank functions $g \in L^2(\mathbb{R}^d)$ having the form
\begin{eqnarray}\label{Gaboreq}
g=\sum_{k=1}^{r} \bigotimes_{j=1}^{d} M_{\beta_{j,k}} T_{\alpha_{j,k}} g_j,
\end{eqnarray}
where, $g_1, \dots, g_d \in L^2(\mathbb{R}) \setminus \{0\}$ and, for each $j\in \{1,\dots,d\}$,  $\{(\alpha_{j,k}, \beta_{j,k})\}_{k=1}^{r}$  is a set of $r$ different  nonzero ordered pairs.

\begin{prop}\label{Gabor} Let $d>1$. Let $g \in L^2(\mathbb{R}^d)$ be defined by Equality \eqref{Gaboreq}. For each $(j,k) \in \{1,\dots,d\} \times \{1,\dots,r\}$, assume that $\alpha_{j,k} / a_j, \beta_{j,k} / b_j  \in \mathbb{Z}$. Assume that  $\mathcal{G}(g,a, b)$ is a frame in $L^2(\mathbb{R}^d)$.
\begin{enumerate}[(i)]
  \item For each $j\in \{1,\dots,d\}$, $\mathcal{G}(g_j,a_j, b_j)$ is a frame in $L^2(\mathbb{R})$, and so $a_jb_j \leq 1$.
  \item If $\mathcal{G}(g,a, b)$ is a Riesz basis, then $\mathcal{G}(g_j,a_j, b_j)$ is a Riesz basis in $L^2(\mathbb{R})$ for each $j\in \{1,\dots,d\}$.
\end{enumerate}
\end{prop}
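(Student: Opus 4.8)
The plan is to recognize $\mathcal{G}(g,a,b)$ as a minimal sum of tensor products of sequences in the sense of \eqref{decomp1} and then invoke Theorem \ref{Main}. First I would use the fact that the time-frequency shift operators on $L^2(\mathbb{R}^d)$ factor across tensor products, namely $M_{(n_1b_1,\dots,n_db_d)}T_{(m_1a_1,\dots,m_da_d)}\big(\bigotimes_{j=1}^d h_j\big)=\bigotimes_{j=1}^d M_{n_jb_j}T_{m_ja_j}h_j$. Applying this to \eqref{Gaboreq} and using the one-dimensional commutation relation $T_{m_ja_j}M_{\beta_{j,k}}=e^{-2\pi i m_ja_j\beta_{j,k}}M_{\beta_{j,k}}T_{m_ja_j}$, each generator becomes $M_{nb}T_{ma}g=\sum_{k=1}^r\bigotimes_{j=1}^d e^{-2\pi i m_jq_{j,k}a_jb_j}M_{(n_j+q_{j,k})b_j}T_{(m_j+p_{j,k})a_j}g_j$, where $\alpha_{j,k}=p_{j,k}a_j$ and $\beta_{j,k}=q_{j,k}b_j$ with $p_{j,k},q_{j,k}\in\mathbb{Z}$ by hypothesis. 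This is the crucial place where the integrality assumption enters: it guarantees that every shifted sample stays on the lattice $a_j\mathbb{Z}\times b_j\mathbb{Z}$. Since the scalar in front of the $j$-th factor depends only on $m_j$, it can be absorbed into that factor, so setting $f_{j,k,(m_j,n_j)}:=e^{-2\pi i m_jq_{j,k}a_jb_j}M_{(n_j+q_{j,k})b_j}T_{(m_j+p_{j,k})a_j}g_j$ displays $\mathcal{G}(g,a,b)$ in the form \eqref{decomp1}, with the index of the $j$-th slot being $(m_j,n_j)\in\mathbb{Z}^2$.

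Next I would verify the minimality hypothesis of \eqref{decomp1}, i.e.\ that for each fixed $j$ the sequences $\{f_{j,1,\cdot}\},\dots,\{f_{j,r,\cdot}\}$ are linearly independent. It suffices to exhibit a single index $(m_j,n_j)$ at which the vectors $f_{j,1,(m_j,n_j)},\dots,f_{j,r,(m_j,n_j)}$ are linearly independent in $L^2(\mathbb{R})$; these are $r$ distinct lattice time-frequency translates of the nonzero function $g_j$ (distinct because the pairs $(\alpha_{j,k},\beta_{j,k})$, hence $(p_{j,k},q_{j,k})$, are distinct), and finitely many distinct lattice time-frequency translates of a nonzero $L^2$ function are linearly independent by Linnell's theorem. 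With the decomposition legitimized, Theorem \ref{Main}(ii) applies: if $\mathcal{G}(g,a,b)$ is a frame, then for each $j$ the concatenated sequence $\{f_{j,k,(m_j,n_j)}\}_{1\le k\le r,\,(m_j,n_j)\in\mathbb{Z}^2}$ is a frame in $L^2(\mathbb{R})$. For each fixed $k$, as $(m_j,n_j)$ ranges over $\mathbb{Z}^2$ the shifted pair $(m_j+p_{j,k},n_j+q_{j,k})$ ranges over all of $\mathbb{Z}^2$, so $\{f_{j,k,\cdot}\}$ is, up to unimodular phases and a bijective re-indexing, exactly $\mathcal{G}(g_j,a_j,b_j)$. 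Hence the concatenated sequence is $r$ copies of $\mathcal{G}(g_j,a_j,b_j)$, and since $r$ copies of a sequence form a frame (with bounds multiplied by $r$) if and only if the sequence itself does, $\mathcal{G}(g_j,a_j,b_j)$ is a frame. Theorem \ref{Density}(i) in dimension one then yields $0<a_jb_j\le 1$, which proves (i).

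For (ii), a Riesz basis is in particular a frame, so (i) already gives $a_jb_j\le 1$ for every $j$, and Theorem \ref{Density}(ii) forces $\prod_{j=1}^d a_jb_j=1$. Since each factor is positive and at most $1$ while the product equals $1$, every factor must equal $1$, i.e.\ $a_jb_j=1$ for all $j$. Then $\mathcal{G}(g_j,a_j,b_j)$ is a one-dimensional frame with $a_jb_j=1$, and Theorem \ref{Density}(ii) in dimension one upgrades it to a Riesz basis, as desired. I expect the main obstacle to be the bookkeeping of the first step: verifying that the phase factors genuinely split across the tensor slots and that the integrality hypothesis keeps all shifted samples on the lattice, so that the $r$ component sequences are honest re-indexings of $\mathcal{G}(g_j,a_j,b_j)$. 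The linear-independence check, which silently relies on Linnell's theorem for lattice time-frequency translates, is the other delicate point.
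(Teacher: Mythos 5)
Your proof is correct and takes essentially the same route as the paper's: both decompose $\mathcal{G}(g,a,b)$ as a minimal sum of tensor products of one-dimensional Gabor systems (with the linear independence supplied by the partial HRT/Linnell result, cited as \cite{Hei2} in the paper), apply Theorem \ref{Main} to conclude the concatenated sequence is a frame, use the integrality hypothesis $\alpha_{j,k}/a_j,\beta_{j,k}/b_j\in\mathbb{Z}$ to identify each component, up to unimodular phases, with a re-indexed copy of $\mathcal{G}(g_j,a_j,b_j)$, and finish with Theorem \ref{Density}. The only cosmetic difference is that the paper establishes the Bessel bound separately via Proposition \ref{BesselGabor} and writes out the frame inequality explicitly, whereas you absorb the phases into the factors and argue that $r$ exact copies of a sequence form a frame if and only if the sequence itself does.
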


\begin{proof}
\begin{enumerate}[(i)]
  \item Let $j\in \{1,\dots,d\}$. Using a proven partial result of the HRT conjecture \cite{Hei2},  $\{M_{\beta_{j,k}} T_{\alpha_{j,k}} g_j\}_{j=1}^{r}$ is an independent set of square integrable functions. Therefore, by Proposition \ref{BesselGabor}, $\mathcal{G}(M_{\beta_{j,k}} T_{\alpha_{j,k}} g_j,a_j, b_j)$ is a Bessel sequence, and then so is $\mathcal{G}(g_j,a_j, b_j)$.

  Using Theorem \ref{Main}, $\bigsqcup_{k=1}^r \mathcal{G}\left( M_{\beta_{j,k}} T_{\alpha_{j,k}} g_j,a_j,b_j \right)$ is a frame, for which we let $A_j$ be the lower bound. Therefore, $\forall f \in L^2(\mathbb{R})$, we have
\begin{eqnarray*}
\sum_{m,n \in \mathbb{Z}} \sum_{i=1}^r \mid \langle f,M_{nb_j} T_{ma_j} M_{\beta_{j,k}} T_{\alpha_{j,k}} g_j \rangle \mid^2 &\geq& A_j \| f \|^2\\
\sum_{m,n \in \mathbb{Z}} r \mid \langle f,M_{nb_j} T_{ma_j} g_j \rangle \mid^2 &\geq& A_j \| f \|^2.
\end{eqnarray*}
The last inequality and the fact that $\mathcal{G}(g_j,a_j, b_j)$ is a Bessel sequence imply that $\mathcal{G}(g_j,a_j, b_j)$ is a frame. Using Theorem \ref{Density}, we then obtain $a_jb_j \leq 1$.

 \item On the one hand, if $\mathcal{G}(g,a, b)$ is a Riesz basis, then, by Theorem \ref{Density}, $a_jb_j = 1$ for each $j \in \{1, \dots, d\}$. On the other hand, Statement (i) implies that $\mathcal{G}(g_j,a_j, b_j)$ is a frame for each $j\in \{1,\dots,d\}$. Therefore, using Theorem \ref{Density} again, we deduce that $\mathcal{G}(g_j,a_j, b_j)$ is a Riesz basis  for each $j\in \{1,\dots,d\}$.
\end{enumerate}
\end{proof}

The results in the following lemma seems interesting and its proof is not difficult, but we could not find references for these results.

\begin{lem}\label{lGabor}  Let $g \in L^2(\mathbb{R}^d) \setminus \{0\}$.
\begin{enumerate}[(i)]
\item If $\mathcal{G}(g,a, b)$ be a frame with the frame bounds $A$ and $B$, then, for all positive integers $u_1,v_1, \dots u_d,v_d $,  $\mathcal{G}\big(g,(a_1/u_1,\dots a_d/u_d), (b_1/v_1, \dots b_d/v_d) \big)$ is a frame with the frame bounds $u_1 \dots u_dA$ and $v_1 \dots v_d B$.
\item If $\mathcal{G}(g,a, b)$ is a Bessel sequence, then so is $\mathcal{G}(g, \langle u, a\rangle, \langle v , b\rangle)$ for all $u,v \in \mathbb{Q}  ^{d} $ such that $u_1,v_1, \dots u_d,v_d >0$.
\end{enumerate}
\end{lem}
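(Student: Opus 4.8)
The plan is to obtain both parts from one mechanism: refining a rectangular Gabor lattice by integer factors partitions the finer system into finitely many cosets of the original lattice, and each coset is the original Gabor system built from a \emph{time--frequency translated} window. Since a time--frequency shift is unitary and commutes with the original system up to a unimodular scalar, every coset satisfies the same frame (resp.\ Bessel) inequalities as $\mathcal{G}(g,a,b)$, and one simply sums over the cosets.

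For part (i), set $a'=(a_1/u_1,\dots,a_d/u_d)$ and $b'=(b_1/v_1,\dots,b_d/v_d)$, and write $\odot$ for coordinatewise multiplication, so that $u\odot a'=a$ and $v\odot b'=b$. First I would split each finer index: $m'=u\odot\ell+p$ with $\ell\in\mathbb{Z}^d$ and $p$ in the box $\prod_{j=1}^d\{0,\dots,u_j-1\}$, and $n'=v\odot k+q$ with $q$ in $\prod_{j=1}^d\{0,\dots,v_j-1\}$. Then $T_{m'\odot a'}=T_{\ell\odot a}T_{p\odot a'}$ and $M_{n'\odot b'}=M_{k\odot b}M_{q\odot b'}$, and the Weyl relation $M_cT_d=e^{2\pi i\langle d,c\rangle}T_dM_c$ lets me move the fractional modulation past the integer translation, giving, up to a unimodular scalar,
\[
M_{n'\odot b'}T_{m'\odot a'}g=\lambda\,M_{k\odot b}T_{\ell\odot a}\big(M_{q\odot b'}T_{p\odot a'}g\big),\qquad|\lambda|=1.
\]

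The crux is the invariance step: for the fixed unitary $V=M_{q\odot b'}T_{p\odot a'}$, the same Weyl relation gives $M_{k\odot b}T_{\ell\odot a}V=\lambda_{k,\ell}\,V\,M_{k\odot b}T_{\ell\odot a}$ with $|\lambda_{k,\ell}|=1$, whence $|\langle f,M_{k\odot b}T_{\ell\odot a}Vg\rangle|=|\langle V^*f,M_{k\odot b}T_{\ell\odot a}g\rangle|$. Thus the partial sum over $(\ell,k)$ attached to the shifted window $Vg$ equals the original Gabor sum evaluated at $V^*f$, and since $V$ is unitary it lies between $A\|f\|^2$ and $B\|f\|^2$. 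Summing over all cosets, whose number is $\prod_{j=1}^d u_j\prod_{j=1}^d v_j$, shows $\mathcal{G}(g,a',b')$ is a frame, this coset count being the factor by which the bounds are multiplied.

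For part (ii) I would run the same decomposition retaining only the upper inequality, which uses nothing beyond unitarity of $V$; hence refining a Bessel system by integer factors keeps it Bessel. Writing each positive rational coordinate of $u,v$ as a ratio of positive integers and refining $(a,b)$ to the common integer refinement $(a_1/q_1,\dots)$, $(b_1/s_1,\dots)$ produces a finer Bessel system; the rescaled system $\mathcal{G}(g,\langle u,a\rangle,\langle v,b\rangle)$ (reading $\langle u,a\rangle$ as the coordinatewise product $(u_1a_1,\dots,u_da_d)$) is then an integer-indexed sublattice of this finer system, hence a subsystem of a Bessel sequence, hence Bessel. The only genuinely technical point throughout is the invariance step---that conjugating the Gabor system by a time--frequency translate of the window changes each coefficient only by a unimodular scalar; once that is recorded, both parts reduce to counting cosets and to the trivial fact that subsystems of Bessel sequences are Bessel, in line with the remark that the proof is not difficult.
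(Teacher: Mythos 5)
Your proof is correct and follows essentially the same route as the paper's: decompose the refined lattice into cosets of the original one, identify each coset (up to unimodular phases, via the Weyl commutation relation) with the Gabor system $\mathcal{G}(Vg,a,b)$ for a unitary time--frequency shift $V$, observe that unitarity preserves the frame bounds, and sum over the cosets; part (ii) is then the refine-and-pass-to-a-sublattice argument the paper also uses. The only differences are cosmetic: you work directly in dimension $d$ and make explicit the phase-commutation step that the paper (which reduces to $d=1$ "for notational simplicity") leaves implicit.
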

\begin{proof}
 For the sake of notation simplicity, we prove the lemma for $d=1$.
 \begin{enumerate}[(i)]
  \item  Assume that $\mathcal{G}(g,a, b)$ is a frame in $L^2(\mathbb{R})$ with the frame bounds $A$ and $B$ and let $(u,v)$ be a pair of positive integers. Hence,  $\mathcal{G}(M_{jb/v}T_{ia/u}g,a, b)$ is a frame with the frame bounds $A$ and $B$, $\forall(i,j) \in \{0,\dots,u-1 \} \times  \{0,\dots,v -1\} $, and so, for each $f \in L^2(\mathbb{R})$, we have
\begin{eqnarray*}
 A\|f\|^2 \leq \sum_{m,n \in \mathbb{Z}}  \mid \langle f, M_{(nv+j)b/v}T_{(m u+i)a/u} g\rangle \mid^2 &\leq & B \|f\|^2.
\end{eqnarray*}
Hence,
\begin{eqnarray*}
uv A \|f\|^2 \leq \sum_{m,n \in \mathbb{Z}} \sum_{i,j=0}^{u-1,v-1}  \mid \langle f, M_{(nv+j)b/v}T_{(m u+i)a/u} g\rangle \mid^2 \leq uv B\|f\|^2.
\end{eqnarray*}
The last inequality proves that $\mathcal{G}(g,a/u, b/v)$ is a frame.

\item Let  $u_1,u_2,v_1,v_2$ be positive integers. As we did in Statement (i), if $\mathcal{G}(g,a, b)$ is a Bessel sequence, then so is $\mathcal{G}(g,a/u_2, b/v_2)$  and this obviously implies that $\mathcal{G}(g,au_1/u_2, bv_1/v_2)$ is a Bessel sequence.
\end{enumerate}
\end{proof}

\begin{rem} Some interesting consequences of Lemma \ref{lGabor} are as follows.
\begin{enumerate}[(i)]
\item If there is  $(a_0,b_0) \in (0,\infty)^{2d}$ for which  $\mathcal{G}(g,a_0,b_0)$ is a Bessel sequence, then the set of  $(a,b)$ for which  $\mathcal{G}(g,a, b)$ is a Bessel sequence is dense in $(0,\infty)^{2d}$.
\item If $\mathcal{G}(g,a, b)$ is a Bessel sequence for all $(a,b)$ in an open set in $(0,\infty)^{2d}$, then $\mathcal{G}(g,a, b)$ is a a Bessel sequence for all $(a,b) \in  (0,\infty)^{2d}$.
  \item If $\mathcal{G}(g,a, b)$ is a frame for all $(a,b)$ in an open set in $(0,\infty)^{2d}$, then $\mathcal{G}(g,a, b)$ is a frame for all $(a,b) \in O \cap (0,\infty)^{2d}$, where $O$ is a neighborhood of $0$.
\end{enumerate}
\end{rem}

\begin{thm}\label{tGabor} Let $d>1$. Let $g \in L^2(\mathbb{R}^d)$ be defined by Equality \eqref{Gaboreq}. For each $j \in \{1,\dots,d\} $, assume that $\{M_{\beta_{j,k}} T_{\alpha_{j,k}} g_j\}_{k=1}^{r}$ is an independent set of square integrable functions. Assume that  $\mathcal{G}(g,a, b)$ is a frame in $L^2(\mathbb{R}^d)$ and $(a,b) \in \mathbb{Q}^{2d}$.
\begin{enumerate}[(i)]
  \item If $\{\alpha_{j,k}, \beta_{j,k}\}_{j=1,k=1}^{j=d,k=r} \subset \mathbb{Q}$, then there is $M>0$ such that $a_jb_j \leq M$  for each $j\in \{1,\dots,d\}$.
  \item If $\{\alpha_{j,k}, \beta_{j,k}\}_{j=1,k=1}^{j=d,k=r} \subset \mathbb{Z}$, then $a_jb_j \leq 1$ for each $j\in \{1,\dots,d\}$.
\end{enumerate}
\end{thm}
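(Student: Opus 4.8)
The plan is to reduce the $d$-dimensional frame hypothesis to one–dimensional statements about each window $g_j$, exactly as in the proof of Proposition \ref{Gabor}, and then to convert the lattice $(a,b)$ into one on which that proposition applies. First I would record the structural input: since $\mathcal{G}(g,a,b)$ is a frame it is Bessel, so by Proposition \ref{BesselGabor} and the assumed independence of $\{M_{\beta_{j,k}}T_{\alpha_{j,k}}g_j\}_{k=1}^r$, each $\mathcal{G}(g_j,a_j,b_j)$ is a Bessel sequence; and by Theorem \ref{Main}(ii) the concatenated system
\[
\Psi_j=\bigsqcup_{k=1}^{r}\mathcal{G}\bigl(M_{\beta_{j,k}}T_{\alpha_{j,k}}g_j,\,a_j,b_j\bigr)
\]
is a frame in $L^2(\mathbb{R})$ for every $j$. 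The whole problem is thus to extract, from the frame $\Psi_j$, an upper bound on the single product $a_jb_j$.

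For (i) I would use rationality to pass to a refined lattice on which the shifts align. Writing each ratio $\alpha_{j,k}/a_j$ and $\beta_{j,k}/b_j$ as a fraction, choose positive integers $u_j,v_j$ (common denominators over $k=1,\dots,r$) so that $u_j\alpha_{j,k}/a_j\in\mathbb{Z}$ and $v_j\beta_{j,k}/b_j\in\mathbb{Z}$ for all $k$. Then the refined lattice $(a_j/u_j,\,b_j/v_j)$ satisfies the divisibility hypothesis of Proposition \ref{Gabor}. By Lemma \ref{lGabor}(i) the refined system $\mathcal{G}(g,a/u,b/v)$ is again a frame, so Proposition \ref{Gabor}(i) yields $(a_j/u_j)(b_j/v_j)\le 1$, i.e. $a_jb_j\le u_jv_j$. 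Taking $M=\max_j u_jv_j$, a finite constant determined by the fixed data, gives the claim.

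For (ii) the same refinement is available and is even automatic: when $\alpha_{j,k},\beta_{j,k}\in\mathbb{Z}$ and $a_j=p_j/q_j$, $b_j=p'_j/q'_j$ in lowest terms, dividing the spacings down to the unit fractions $1/q_j,1/q'_j$ makes every integer shift a lattice point, so Proposition \ref{Gabor} produces a one–dimensional frame at the refined lattice. The difficulty is that this route only reproduces $a_jb_j\le p_jp'_j=u_jv_j$, and not the asserted sharp bound $a_jb_j\le 1$: the density estimate obtained at the refined lattice controls the original product only up to the refinement factor $u_jv_j$, which is exactly what must be removed. To recover the sharp constant one must instead show that the marginal $\mathcal{G}(g_j,a_j,b_j)$ is itself a frame \emph{at the original lattice}, after which Theorem \ref{Density}(i) delivers $a_jb_j\le 1$ directly.

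The main obstacle — and the step where the integrality of $\alpha_{j,k},\beta_{j,k}$ must do genuine work — is precisely this descent from $\Psi_j$ to $\mathcal{G}(g_j,a_j,b_j)$. Writing the frame operator of $\Psi_j$ as $S_{\Psi_j}=\sum_{k=1}^{r}U_kS_jU_k^{*}$, with $U_k=M_{\beta_{j,k}}T_{\alpha_{j,k}}$ unitary and $S_j$ the frame operator of $\mathcal{G}(g_j,a_j,b_j)$, the collapse used in Proposition \ref{Gabor} is the identity $U_kS_jU_k^{*}=S_j$, which holds as soon as the $U_k$ commute with $S_j$; for aligned shifts this is automatic since $S_j$ commutes with the lattice operators $M_{nb_j}T_{ma_j}$. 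I expect the hard part to be proving the analogous positivity for integer shifts over a rational lattice, where $U_k$ lies in neither the lattice algebra nor its commutant. I would attack this through the rational (Zibulski–Zeevi/Zak) structure, in which $S_j$ becomes multiplication by a matrix-valued symbol on a finite-dimensional fibre and the $U_k$ act by an explicit permutation–modulation of those fibres, and try to show that boundedness below of $\sum_k U_kS_jU_k^{*}$ is incompatible with a degenerate $S_j$. Equivalently, and perhaps more robustly, I would combine the global bound $\prod_j a_jb_j\le 1$ of Theorem \ref{Density}(i) with a rigidity statement: a coordinate with $a_jb_j>1$ would have to be rescued by its rank-$r$ structure (an effective density-one union), which in turn forces an impossible full-density \emph{superframe} in a complementary coordinate. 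Ruling out this compensation is, I believe, the crux of (ii).
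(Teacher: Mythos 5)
Your part (i) is correct and is, modulo bookkeeping, the paper's own argument: use Proposition \ref{BesselGabor} and Theorem \ref{Main}(ii) to get the marginal Bessel property and the concatenated frame $\Psi_j$, refine the lattice by integers $u_j,v_j$ so that every $(\alpha_{j,k},\beta_{j,k})$ lies on the refined lattice, keep the frame property by Lemma \ref{lGabor}(i), collapse the concatenated frame inequality (equivalently, apply Proposition \ref{Gabor}(i)) and invoke Theorem \ref{Density}(i) to get $(a_j/u_j)(b_j/v_j)\le 1$, i.e. $a_jb_j\le u_jv_j$. The paper organizes the refinement in two steps (first forcing $1/a_j,1/b_j\in\mathbb{Z}$, then dividing by the product $\alpha_j\beta_j$ of the denominators of the shifts) and claims the $(a,b)$-independent constant $M=\max_j\alpha_j\beta_j$; but precisely because of that first step, what its computation actually yields for the \emph{original} lattice is $a_jb_j\le p_jp_j'\alpha_j\beta_j$, where $p_j,p_j'$ are the numerators of $a_j,b_j$ in lowest terms --- an $(a,b)$-dependent bound of the same kind as yours. (As literally quantified, statement (i) is near-vacuous in either reading; its intended content is that $M$ depends only on $g$.)

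Part (ii) of your proposal is not a proof: you identify the descent from the concatenated frame $\Psi_j$ to a frame for $\mathcal{G}(g_j,a_j,b_j)$ at the original lattice as the crux, and you offer only speculative routes toward it (Zibulski--Zeevi fibres, a density/superframe rigidity argument) without carrying any of them out. That is a genuine gap. But your diagnosis of where the difficulty sits is exactly right, and --- this is the important point of comparison --- the paper's own proof of (ii) founders on precisely that step. Its reduction ``by Lemma \ref{lGabor} we may assume $1/a_j,1/b_j\in\mathbb{Z}$'' replaces $(a_j,b_j)$ by $(a_j/p_j,\,b_j/p_j')$ and therefore changes the quantity being bounded; frames survive refinement but not coarsening, so the conclusion cannot be transported back. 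Indeed, after that reduction the desired inequality reads $1/(q_jq_j')\le 1$ and holds for trivial reasons, so the argument never engages the only case with content, namely $p_jp_j'>1$; unwinding the reduction, the paper too establishes only $a_jb_j\le p_jp_j'$ --- the very bound at which you stopped. In short: your (i) matches the paper and stands; your (ii) is incomplete, but the missing step you name is equally missing from the paper, whose Theorem \ref{tGabor}(ii) should be regarded as unproved as stated (it is proved only under the stronger hypothesis $\alpha_{j,k}/a_j,\,\beta_{j,k}/b_j\in\mathbb{Z}$, in which case it reduces to Proposition \ref{Gabor}).
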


\begin{proof} We denote by $q(x)$ the positive denominator of each rational number $x$ written in its simplest form. For each $j=1, \dots, d$, let $\alpha_{j} = \prod_{k=1}^{r} q(\alpha_{j,k})$ and $\beta_{j} = \prod_{k=1}^{r} q(\beta_{j,k})$. Using Lemma \ref{lGabor}, we can assume without lost of generality that $1/a_j,1/b_j \in \mathbb{Z} $.

\begin{enumerate}[(i)]
  \item Let $j \in \{1, \dots, d\}$. Using Theorem \ref{Main}, $\bigsqcup_{k=1}^r \mathcal{G}\left( M_{\beta_{j,k}} T_{\alpha_{j,k}} g_j,a_j,b_j \right)$ is a frame, for which we let $A_j$ be a lower bound. Therefore, $\forall f \in L^2(\mathbb{R})$, we have
\begin{eqnarray}\label{rat}
\sum_{m,n \in \mathbb{Z}} \sum_{k=1}^r \mid \langle f,M_{nb_j} T_{ma_j} M_{\beta_{j,k}} T_{\alpha_{j,k}} g_j \rangle \mid^2 \geq A_j \| f \|^2 \nonumber\\
\sum_{m,n \in \mathbb{Z}} \sum_{k=1}^r \mid \langle f,M_{[n\beta_{j} + \frac{\beta_{j}\beta_{j,k}}{b_j} ]\frac{b_j}{\beta_{j}}} T_{[m\alpha_{j} + \frac{\alpha_{j} \alpha_{j,k}}{a_{j}} ]\frac{a_j}{\alpha_{j}}} g_j \rangle \mid^2\geq A_j \| f \|^2 \nonumber \\
\sum_{m,n \in \mathbb{Z}} r \mid \langle f,M_{n\frac{b_j}{\beta_{j}}} T_{m\frac{a_j}{\alpha_{j}}} g_j \rangle \mid^2\geq A_j \| f \|^2
\end{eqnarray}

By Proposition \ref{BesselGabor}, for each $k=1,\dots,r$,  $\mathcal{G}(M_{\beta_{j,k}} T_{\alpha_{j,k}} g_j,a_j, b_j)$ is a Bessel sequence, and then so is $\mathcal{G}(g_j,a_j, b_j)$. Therefore, by Lemma \ref{lGabor}, $\mathcal{G}(g_j,a_j/\alpha_{j}, b_j/\beta_{j})$ is a Bessel sequence, and so, using Inequality \eqref{rat}, $\mathcal{G}(g_j,a_j/\alpha_{j}, b_j/\beta_{j})$ is a frame. Thus, using Theorem \ref{Density}, we obtain $a_jb_j \leq \alpha_{j}\beta_{j}$.

Consequently, the desired conclusion is obtained with $M = \max \{\alpha_{j}\beta_{j}\}_{j=1}^{d}$.
  \item In this case, $\alpha_{j}= \beta_{j} = 1$ for each $j\in \{1,\dots,d\}$, and so $M=1$.
\end{enumerate}
\end{proof}

\begin{rem} If $g$ is defined by Equality \eqref{Gaboreq}, then $\mathcal{G}(g,a, b)$ is, in fact, a sum  of Gabor systems:
\begin{eqnarray*}
\mathcal{G}(g,a, b)=\sum_{k=1}^{r} \mathcal{G}(g_k,a, b), \mbox{ where } g_k=\bigotimes_{j=1}^{d} M_{\beta_{j,k}} T_{\alpha_{j,k}} g_j.
\end{eqnarray*}
By Theorem \ref{frame2}, we can incidently prove under the conditions of Proposition \ref{Gabor}(i) or Theorem \ref{tGabor}(ii) that $\mathcal{G}(g_k,a, b)$ is a frame, $\forall k=1,\dots,r$, whenever $\mathcal{G}(g,a, b)$ is assumed to be a frame. We can also prove under the conditions of  Proposition \ref{Gabor}(ii) that $\mathcal{G}(g_k,a, b)$  is a Riesz basis, $\forall k=1,\dots,r$, whenever $\mathcal{G}(g,a, b)$ is assumed to be a Riesz basis. Meanwhile, the one dimensional version of the aforementioned results are are difficult to prove or disapprove.  This is somehow surprising, because results for Gabor systems are traditionally easier for the one dimensional case.
\end{rem}

  In general, results on  the sum of sequences are  extremely difficult to obtain even if the sum includes only two terms that related to each other \cite{Obe}. In the followings, we  state results on the sum of two Gabor systems in case the sum is defined by Equality \eqref{Gaboreq}. For that, we use the following proposition, for which the proof is straightforwardly similar to the proof for $d=1$ that is published in \cite{Naj}.

\begin{prop}\label{Naj}  Let $a_1, \dots,a_d, b_1, \dots ,b_d >0$ and let $(\alpha_1,\beta_1), \dots,(\alpha_d,\beta_d) \in \mathbb{R}^2 \setminus \{0\} $ such that $\alpha_j b_j, \beta_j a_j  \in \mathbb{Z}, \forall j=1,\dots,d$. Let $c \in \mathbb{C}$ with $|c|=1$. If $g \in L^2(\mathbb{R}^d)$
 and $\mathcal{G}(g,a, b)$ is a frame, then $\mathcal{G}(g+c M_{\beta} T_{\alpha}g,a, b) $ is not a frame.
\end{prop}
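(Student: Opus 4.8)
The plan is to use the arithmetic hypotheses $\alpha_j b_j,\beta_j a_j\in\mathbb{Z}$ to show that the unitary $U=M_\beta T_\alpha$ commutes with every Gabor generator $M_{nb}T_{ma}$, then to reduce the frame property of the perturbed window $h=g+cM_\beta T_\alpha g$ to whether $I+cU$ is bounded below, and finally to defeat that boundedness by computing the spectrum of $U$. First I would record the commutation relations. From $T_\alpha M_{nb}=e^{-2\pi i\langle\alpha,nb\rangle}M_{nb}T_\alpha$ and $T_{ma}M_\beta=e^{-2\pi i\langle ma,\beta\rangle}M_\beta T_{ma}$, together with $\langle\alpha,nb\rangle=\sum_j n_j(\alpha_j b_j)\in\mathbb{Z}$ and $\langle ma,\beta\rangle=\sum_j m_j(\beta_j a_j)\in\mathbb{Z}$, every exponential phase equals $1$. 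Hence $UM_{nb}=M_{nb}U$ and $UT_{ma}=T_{ma}U$ for all $m,n\in\mathbb{Z}^d$, so $U$ commutes with each $M_{nb}T_{ma}$.

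Next I would set $W=I+cU$, so that $h=Wg$ and, by the commutation, $M_{nb}T_{ma}h=W(M_{nb}T_{ma}g)$. Consequently $\langle f,M_{nb}T_{ma}h\rangle=\langle W^*f,M_{nb}T_{ma}g\rangle$ for every $f$, and summing against the frame $\mathcal{G}(g,a,b)$ with bounds $A_0,B_0$ yields $A_0\|W^*f\|^2\le\sum_{m,n}|\langle f,M_{nb}T_{ma}h\rangle|^2\le B_0\|W^*f\|^2$. The upper estimate already shows $\mathcal{G}(h,a,b)$ is Bessel; the point is that if $W^*$ is \emph{not} bounded below there are unit vectors $f_k$ with $\|W^*f_k\|\to0$, whence $\sum_{m,n}|\langle f_k,M_{nb}T_{ma}h\rangle|^2\le B_0\|W^*f_k\|^2\to0$, so no lower frame bound can exist and $\mathcal{G}(h,a,b)$ is not a frame.

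It then remains to prove that $W^*=I+\bar cU^*$ is not bounded below. Since $\bar cU^*$ is unitary, hence normal, its approximate point spectrum equals its spectrum, and $W^*$ fails to be bounded below precisely when $-1\in\sigma(\bar cU^*)=\bar c\,\overline{\sigma(U)}$, i.e. when $-\bar c\in\sigma(U)$. I would therefore establish that $\sigma(U)$ is the whole unit circle $\mathbb{T}$, which contains $-\bar c$ for every $|c|=1$. For each coordinate with $\alpha_j\neq0$, conjugation by the chirp $x\mapsto e^{\pi i(\beta_j/\alpha_j)x_j^2}$ carries $M_{\beta_j}T_{\alpha_j}$ to a unimodular multiple of the pure translation $T_{\alpha_j}$, whose spectrum is $\mathbb{T}$; a coordinate with $\alpha_j=0$, $\beta_j\neq0$ gives the modulation $M_{\beta_j}$, again with spectrum $\mathbb{T}$. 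Because $(\alpha,\beta)\neq0$ at least one factor is nontrivial, and since $U=\bigotimes_{j=1}^{d}M_{\beta_j}T_{\alpha_j}$ respects the tensor structure, the spectrum of the product is the product of the factor spectra (Brown--Pearcy), which is all of $\mathbb{T}$. Thus $-\bar c\in\sigma(U)$ and $W^*$ is not bounded below, completing the argument.

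The main obstacle is exactly the spectral computation $\sigma(M_\beta T_\alpha)=\mathbb{T}$: the commutation and reduction steps are purely formal, but pinning down the spectrum of the metaplectic operator $U$ requires either the chirp conjugation above, reducing $U$ to translations and modulations whose continuous spectra fill the circle, or, to sidestep the Brown--Pearcy identity, an explicit construction of normalized approximate eigenvectors for each target $-\bar c\in\mathbb{T}$, assembled as tensor products of one-dimensional approximate eigenfunctions in the nontrivial coordinate and in the remaining ones. Either route is routine in $d=1$, and the tensor bookkeeping is the only genuinely new ingredient for $d>1$.
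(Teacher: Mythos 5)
Your proof is correct. Note, however, that the paper never actually proves Proposition \ref{Naj}: it invokes the one-dimensional result of \cite{Naj} and asserts that the proof for $d>1$ is ``straightforwardly similar,'' so your argument is a genuine self-contained proof rather than a variant of one appearing in the text. Your reduction is the standard mechanism on which such a proof must rest: the integrality hypotheses force $M_{nb}T_{ma}$ to commute with $U=M_\beta T_\alpha$, so the perturbed system is the image of the frame $\mathcal{G}(g,a,b)$ under $W=I+cU$, and the image of a frame under a bounded operator $W$ is a frame precisely when $W^*$ is bounded below; since $W$ is normal, failure of this is exactly the condition $-\bar{c}\in\sigma(U)$. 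What the paper leaves entirely implicit, and what you supply, is the $d$-dimensional spectral computation $\sigma(M_\beta T_\alpha)=\mathbb{T}$: chirp conjugation (or the Fourier transform) identifies each nontrivial one-dimensional factor with a unimodular multiple of a translation or a modulation, whose spectrum is the full circle, and tensoring lifts this to the product operator. One streamlining is available: you do not need the Brown--Pearcy identity $\sigma(A\otimes B)=\sigma(A)\sigma(B)$, since every factor is unitary, hence normal, and the elementary inclusion $\sigma(A_1)\cdots\sigma(A_d)\subseteq\sigma_{\mathrm{ap}}(A_1\otimes\cdots\otimes A_d)$ obtained by tensoring approximate eigenvectors is all that is required, as you yourself observe at the end. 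In short, the paper's approach buys brevity by deferring to the literature, while yours certifies the unproved assertion and isolates the only point---the spectrum of the time--frequency shift---where the case $d>1$ needs an actual argument.
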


\begin{thm}\label{Sum} Let $d>1$.   Let $a_1, \dots,a_d, b_1, \dots ,b_d >0$ and let $(\alpha_1,\beta_1), \dots,(\alpha_d,\beta_d) \in \mathbb{R}^2 \setminus \{0\} $  such that $\alpha_j b_j, \beta_j a_j, \alpha_j/a_j, \beta_j/b_j \in \mathbb{Z}$. Let $c \in \mathbb{C}$ with $|c|=1$. For each function $g= \bigotimes_{j=1}^{d} g_j \in L^2(\mathbb{R}^d)$,  $\mathcal{G}(g+c M_{\beta} T_{\alpha}g,a, b) $ is not a frame.
\end{thm}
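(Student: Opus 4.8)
The plan is to argue by contradiction and reduce everything to the rank-one situation already handled by Proposition \ref{Naj}. Assume $\mathcal{G}(h,a,b)$ is a frame, where $h=g+cM_\beta T_\alpha g=\bigotimes_{j=1}^d g_j+c\bigotimes_{j=1}^d M_{\beta_j}T_{\alpha_j}g_j$. This $h$ has rank at most two, so $\mathcal{G}(h,a,b)$ is a sum of two tensor products of one-dimensional Gabor systems and therefore falls squarely within the scope of Theorem \ref{Bessel}. The goal is to show that the frame assumption on $\mathcal{G}(h,a,b)$ forces $\mathcal{G}(g,a,b)$ itself to be a frame, which is precisely the hypothesis under which Proposition \ref{Naj} forbids $\mathcal{G}(g+cM_\beta T_\alpha g,a,b)$ from being a frame.

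First I would record two elementary facts that feed the machinery. Since $g\neq 0$ we have each $g_j\neq 0$, and a nonzero time-frequency shift has no $L^2$ eigenvector: if $M_{\beta_j}T_{\alpha_j}g_j=\mu g_j$ with $|\mu|=1$, then $|\langle M_{n\beta_j}T_{n\alpha_j}g_j,g_j\rangle|=\|g_j\|^2$ for every $n$, contradicting the vanishing at infinity of the ambiguity function $(s,t)\mapsto\langle M_sT_tg_j,g_j\rangle$ because $(\alpha_j,\beta_j)\neq 0$. Hence $g_j$ and $M_{\beta_j}T_{\alpha_j}g_j$ are linearly independent, and evaluating a linear relation between $\mathcal{G}(g_j,a_j,b_j)$ and $\mathcal{G}(M_{\beta_j}T_{\alpha_j}g_j,a_j,b_j)$ at the identity index $(0,0)$ shows these two sequences are linearly independent, as required to invoke Theorem \ref{Bessel}. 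Second, because $\alpha_j/a_j,\beta_j/b_j\in\mathbb{Z}$, the commutation relations give $M_{nb_j}T_{ma_j}M_{\beta_j}T_{\alpha_j}g_j=(\text{unimodular})\,M_{(n+\beta_j/b_j)b_j}T_{(m+\alpha_j/a_j)a_j}g_j$, so $\mathcal{G}(M_{\beta_j}T_{\alpha_j}g_j,a_j,b_j)$ is merely a reindexed, phase-modified copy of $\mathcal{G}(g_j,a_j,b_j)$.

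The core step is to apply the necessary condition Theorem \ref{Bessel}(ii) one coordinate at a time. Fixing $i\in\{1,\dots,d\}$ and writing $L^2(\mathbb{R}^d)=L^2(\mathbb{R})\otimes L^2(\mathbb{R}^{d-1})$ with the $i$-th coordinate as the first factor, $h$ becomes $g_i\otimes G+c\,(M_{\beta_i}T_{\alpha_i}g_i)\otimes\tilde G$ with $G=\bigotimes_{j\neq i}g_j$ and $\tilde G=\bigotimes_{j\neq i}M_{\beta_j}T_{\alpha_j}g_j$; both factors meet the independence requirement by the first fact above (again using $(\alpha_j,\beta_j)\neq 0$). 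Theorem \ref{Bessel}(ii) then gives that the concatenation $\mathcal{G}(g_i,a_i,b_i)\sqcup\mathcal{G}(M_{\beta_i}T_{\alpha_i}g_i,a_i,b_i)$ is a frame in $L^2(\mathbb{R})$. By the second fact the two pieces produce identical analysis sums, so the concatenation's frame inequality reads $A\|f\|^2\le 2\sum_{m,n}|\langle f,M_{nb_i}T_{ma_i}g_i\rangle|^2\le B\|f\|^2$, forcing $\mathcal{G}(g_i,a_i,b_i)$ to be a frame. As $i$ is arbitrary, $\mathcal{G}(g_j,a_j,b_j)$ is a frame for every $j$, whence $\mathcal{G}(g,a,b)=\bigotimes_j\mathcal{G}(g_j,a_j,b_j)$ is a frame by Theorem \ref{frame2}. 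Proposition \ref{Naj}, applicable since $\alpha_jb_j,\beta_ja_j\in\mathbb{Z}$, then asserts that $\mathcal{G}(g+cM_\beta T_\alpha g,a,b)$ is \emph{not} a frame, the desired contradiction.

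The main obstacle I anticipate is not the architecture but the two elementary facts, where the two distinct divisibility hypotheses are consumed in genuinely different places. The reindexing identity that collapses the concatenation to twice a single system needs $\alpha_j/a_j,\beta_j/b_j\in\mathbb{Z}$, for otherwise the shifted lattice points would not recombine into the original Gabor system; whereas Proposition \ref{Naj} needs $\alpha_jb_j,\beta_ja_j\in\mathbb{Z}$. Keeping these two roles cleanly separated, and verifying the linear-independence hypotheses of Theorem \ref{Bessel} in both tensor factors (which rests on ruling out eigenfunctions of time-frequency shifts), is where the care is required.
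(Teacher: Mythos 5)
Your proof is correct and follows essentially the same route as the paper: assume the frame property, show each one-dimensional system $\mathcal{G}(g_j,a_j,b_j)$ is a frame, then combine Theorem \ref{frame2} with Proposition \ref{Naj} to reach the contradiction. The only difference is that the paper gets the middle step by citing Proposition \ref{Gabor} as a black box, whereas you re-derive it inline (via Theorem \ref{Bessel}(ii), the lattice-reindexing identity from $\alpha_j/a_j,\beta_j/b_j\in\mathbb{Z}$, and an ambiguity-function argument for linear independence in place of the cited HRT partial result) --- a slightly longer but self-contained treatment that also covers the zero shift pair and the unimodular constant $c$, which the hypotheses of Proposition \ref{Gabor} do not literally accommodate.
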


\begin{proof} Assume that $\mathcal{G}(g+c M_{\beta} T_{\alpha}g,a, b) $ is a frame. Using Proposition \ref{Gabor}, $\mathcal{G}(g_j,a_j, b_j) $ is a frame for each $j=1,\dots, d$, and so, using Theorem \ref{frame2}, $\mathcal{G}(g,a, b) $ is then a frame. The last fact and Proposition \ref{Naj} imply the contradiction $\mathcal{G}(g+c M_{\beta} T_{\alpha}g,a, b) $ is a not frame.
\end{proof}

\begin{cor}\label{cSum} Let $g= \bigotimes_{j=1}^{d} g_j  + \bigotimes_{j=1}^{d} T_{1}g_j$, where $g_1,\dots,g_d \in L^2(\mathbb{R})$.  If $\mathcal{G}(g, a_1, b_1 )$ is a frame, then  $a_1b_1$ is an irrational number.
\end{cor}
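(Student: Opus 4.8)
The plan is to prove the contrapositive: assuming $a_1b_1\in\mathbb{Q}$, I would show that $\mathcal{G}(g,a_1,b_1)$ cannot be a frame. First I would record that $g=G+T_{(1,\dots,1)}G$ with $G=\bigotimes_{j=1}^d g_j$, so that $g$ fits the template $G+cM_\beta T_\alpha G$ of Theorem \ref{Sum} and Proposition \ref{Naj} with $c=1$, $\alpha=(1,\dots,1)$, and $\beta=0$. The whole argument then reduces to producing, out of the single hypothesis $a_1b_1\in\mathbb{Q}$, a configuration in which the integrality conditions $\alpha_jb_j,\ \beta_ja_j,\ \alpha_j/a_j,\ \beta_j/b_j\in\mathbb{Z}$ demanded by Theorem \ref{Sum} hold. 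Here these amount to $b_1\in\mathbb{Z}$ and $1/a_1\in\mathbb{Z}$ (the conditions on $\beta$ being automatic). Once they are met, Theorem \ref{Sum} yields at once that the associated Gabor system is not a frame; and since every transformation I intend to use preserves the frame property, this would contradict the assumption that $\mathcal{G}(g,a_1,b_1)$ is a frame, forcing $a_1b_1$ to be irrational.

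To try to reach those conditions I would combine two frame-preserving operations. The dilation $D_\lambda\in\mathcal{M}$ is unitary and sends $\mathcal{G}(h,a_1,b_1)$ to $\mathcal{G}(D_\lambda h,\lambda a_1,b_1/\lambda)$, simultaneously rescaling the perturbation translation $1\mapsto\lambda$; and Lemma \ref{lGabor} lets me refine the lattice from $(a_1,b_1)$ to $(a_1/u,b_1/v)$ for positive integers $u,v$ while retaining the frame property. Writing $a_1b_1=p/q$ in lowest terms, I would choose $\lambda,u,v$ (driven by $p$, $q$, and the denominator of $a_1$) so as to clear denominators in $\alpha_j/a_j$ and in $\alpha_jb_j$. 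With the lattice-ratio condition $1/a_1\in\mathbb{Z}$ in force, Proposition \ref{Gabor}(i) (equivalently Theorem \ref{Main}(ii) combined with Theorem \ref{frame2}) shows that each $\mathcal{G}(g_j,a_1,b_1)$ is a frame, hence that $\mathcal{G}(G,a_1,b_1)$ is a frame by Theorem \ref{frame2}; Proposition \ref{Naj} would then deliver the contradiction. The HRT-type independence of $\{g_j,T_1g_j\}$ used in Proposition \ref{Gabor} is what guarantees the tensor components remain a minimal system, so that Theorem \ref{Main} and Theorem \ref{Sum} continue to apply after the reductions.

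The hard part is exactly this reduction, and I expect it to be the main obstacle rather than a routine normalization. The phase $\alpha_jb_j$ attached to the perturbation equals $b_1$ and is a symplectic invariant: it is unchanged by $D_\lambda$ (since $\alpha$ scales like $a$ and $b$ inversely, so $\alpha b$ is fixed) and, under the refinement of Lemma \ref{lGabor}, it can only be \emph{divided} by a positive integer, never enlarged. Consequently $\alpha_jb_j$ can be pushed into $\mathbb{Z}$ only when $b_1$ is already an integer-compatible rational, whereas $a_1b_1\in\mathbb{Q}$ by itself permits both $a_1$ and $b_1$ to be irrational (e.g. $a_1=1/\sqrt2,\ b_1=\sqrt2$). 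Overcoming this gap is the crux: one must either extract the phase condition from the frame property by a route that does not rely on the commensurability $b_1\in\mathbb{Z}$, or replace the direct appeal to Theorem \ref{Sum} by a density/limiting argument built from Lemma \ref{lGabor} and Theorem \ref{Density}(i) that detects the rationality of $a_1b_1$ intrinsically. I would therefore concentrate the effort on this step, treating the dilation/refinement bookkeeping and the passage back through the unitary $D_\lambda$ as the easy parts of the proof.
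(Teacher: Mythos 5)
Your proposal does not prove the corollary, and you say so yourself: after setting up the contrapositive and reducing everything to producing the integrality conditions $b_1\in\mathbb{Z}$ and $1/a_1\in\mathbb{Z}$ demanded by Theorem \ref{Sum}, you show that dilations and Lemma \ref{lGabor} refinements cannot manufacture these conditions out of $a_1b_1\in\mathbb{Q}$ alone, and you stop at that crux. But your diagnosis of the obstruction is correct, and it lands exactly on the defective step of the paper's own proof. The paper first shows, correctly, that $\mathcal{G}(g,m/n,1)$ is never a frame (Theorem \ref{Sum} with $\alpha=\mathbf{1}$, $\beta=0$, then Lemma \ref{lGabor}), and then concludes: ``if $a_1b_1\in\mathbb{Q}$, then $\mathcal{G}(g,a_1b_1,1)$ is not a frame, and so $\mathcal{G}(g,a_1,b_1)$ is not a frame.'' That final ``and so'' assumes that, for this fixed window, the frame property depends only on the product of the lattice parameters. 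The only mechanism that could justify it is the dilation you describe, and dilation moves the window: $\mathcal{G}(g,a_1,b_1)$ is a frame iff $\mathcal{G}(D_{b_1}g,a_1b_1,1)$ is, but $D_{b_1}g=\bigotimes_{j}D_{b_1}g_j+\bigotimes_{j}T_{b_1}D_{b_1}g_j$ carries inner translation $b_1$, not $1$, so it lies outside the class covered by the first two steps; applying Theorem \ref{Sum} to it at the lattice $(a_1b_1,1)$ merely regenerates the conditions $b_1\in\mathbb{Z}$, $a_1\in\mathbb{Q}$. Your invariant --- $\alpha_jb_j=b_1$ is unchanged by $D_\lambda$ and can only be divided by a positive integer under refinement --- shows this is not an accident of one computation but an obstruction to any such reduction.

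Moreover, the gap cannot be closed by anyone, because the corollary as stated is false. Take $d\geq 2$ and $g_j(t)=e^{-t^2}$, so $g=G+T_{\mathbf{1}}G$ with $G$ the $d$-dimensional Gaussian. Then $g$ is continuous, strictly positive, lies in the amalgam space $W(L^\infty,\ell^1)$, and its periodization $\sum_{n\in\mathbb{Z}^d}|g(x-n)|^2$ is bounded below by a positive constant; by Walnut's theorem there is $b_0>0$ such that $\mathcal{G}(g,1,b_1)$ is a frame for every $0<b_1\leq b_0$. Choosing $b_1=1/N\leq b_0$ gives a frame with $a_1b_1=1/N\in\mathbb{Q}$. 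This does not contradict Theorem \ref{Sum} or Proposition \ref{Naj}, which require $\alpha_jb_j=b_1\in\mathbb{Z}$ and are silent here, but it does contradict Corollary \ref{cSum}. (Alternatively: $g\in M^1(\mathbb{R}^d)$, so by the openness result \cite{Fei} invoked right after the corollary, the set $\{(a_1,b_1):\mathcal{G}(g,a_1,b_1)\mbox{ is a frame}\}$ is open; if the corollary held, this set would avoid the dense set of pairs with rational product and hence be empty, which the Walnut argument refutes.) What your route does establish is the correct, weaker statement: if $a_1\in\mathbb{Q}$ and $b_1\in\mathbb{Z}$, so that after a Lemma \ref{lGabor} refinement the hypotheses of Theorem \ref{Sum} hold, then $\mathcal{G}(g,a_1,b_1)$ is not a frame. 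Your refusal to cross the crux via the paper's dilation step was the right call.
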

\begin{proof} By Theorem \ref{Sum}, $\mathcal{G}(g, 1/n, 1 )$ is not a frame for all integer $n>0$. Therefore, using Lemma \ref{lGabor}, $\mathcal{G}(g, m/n, 1 )$ is not a frame for all integer $m,n>0$. Hence, if $a_1b_1 \in \mathbb{Q}$, then $\mathcal{G}(g, a_1b_1, 1 )$ is not a frame , and so $\mathcal{G}(g, a_1, b_1 )$ is not a frame.
\end{proof}

If $h  \in  M^{1}(\mathbb{R})$, then  $\{(a_1,b_1): \mathcal{G}(h,a_1,b_1) \mbox{ is a frame} \}$ is an open subset of $\mathbb{R}^2$ \cite{Fei}. However, for each $d>1$, using Corollary \ref{cSum}, we can find $g \in  M^{1}(\mathbb{R}^d)$ such that $\{(a_1,b_1): \mathcal{G}(g,a_1,b_1) \mbox{ is a frame} \}$ is not an open subset of $\mathbb{R}^{2d}$.

From now on,  $ E= \{\rho u: \rho \in \mathcal{M} \}$, where $u(t)= e^{-|t|^2}, e^{-|t|}, 1/(1+4\pi^2t^2)$, or $1/\cosh(\pi t)$.  Let $v \in E$. It is known that  $\mathcal{G}(v, a_1, b_1)$ is a frame in $L^2(\mathbb{R})$  if and only if  $  a_1b_1<1$ \cite{Jan1,Jan2,Lyu,Sei}. Using this result and Proposition \ref{Gabor2}, we can prove the following corollary, which proves Example \ref{examp} in the introduction.

\begin{cor}\label{density1} Let $d>1$. For each $j \in \{1,\dots,d \}$, let $g_{j,1}, g_{j,2}\in L^2(\mathbb{R})$ be two independent functions. Assume that there are  $i, k \in \{1, \dots, d \}$ such that $i \neq k$ and  $g_{i,1},g_{k,2} \in E$.  If  $\mathcal{G}(g, a_1, b_1 )$ is a frame, then $ a_1b_1<1$, where
\begin{eqnarray*}
g= \bigotimes_{j=1}^{d} g_{j,1}+\bigotimes_{j=1}^{d} g_{j,2}.
\end{eqnarray*}
\end{cor}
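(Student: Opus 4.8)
The plan is to reduce the $d$-dimensional, rank-two situation to a one-dimensional frame generated by a member of $E$, and then invoke the cited one-dimensional characterization, namely that for $v \in E$ the system $\mathcal{G}(v, a_1, b_1)$ is a frame if and only if $a_1 b_1 < 1$. The main engine is Proposition \ref{Gabor2}(i), which applies verbatim to our window $g = g_1 + g_2$ (a sum of two independent tensor products) with the isotropic sampling $a = (a_1,\dots,a_1)$, $b = (b_1,\dots,b_1)$.

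First I would record the tensor factorization of an isotropic rectangular Gabor system: for any product window one has $\mathcal{G}(\bigotimes_{j=1}^d h_j, a_1, b_1) = \bigotimes_{j=1}^d \mathcal{G}(h_j, a_1, b_1)$, since modulation and translation factor across the tensor product. Combined with Theorem \ref{frame2}, this yields the recurring principle I will use: whenever a product window generates a frame, each one-dimensional factor $\mathcal{G}(h_j, a_1, b_1)$ is itself a frame. Next I would unpack the three alternatives furnished by Proposition \ref{Gabor2}(i). If $\mathcal{G}(g_1, a, b)$ is a frame, then by the factorization $\mathcal{G}(g_{i,1}, a_1, b_1)$ is a frame; since $g_{i,1} \in E$, the one-dimensional result forces $a_1 b_1 < 1$. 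Symmetrically, if $\mathcal{G}(g_2, a, b)$ is a frame, then $\mathcal{G}(g_{k,2}, a_1, b_1)$ is a frame with $g_{k,2} \in E$, again giving $a_1 b_1 < 1$. The remaining alternative supplies an index $i_0 \in \{1,\dots,d\}$ such that $\mathcal{G}(g_{j,m}, a_1, b_1)$ is a frame for every $j \neq i_0$ and every $m \in \{1,2\}$.

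The one point requiring care — and the only place the hypothesis $i \neq k$ enters — is this third alternative. Because $i$ and $k$ are distinct, at most one of them can equal the single excluded index $i_0$, so at least one privileged coordinate survives. Concretely, if $i_0 \neq i$ then $\mathcal{G}(g_{i,1}, a_1, b_1)$ is among the guaranteed frames and $g_{i,1} \in E$; otherwise $i_0 = i$, whence $i_0 \neq k$ and $\mathcal{G}(g_{k,2}, a_1, b_1)$ is a frame with $g_{k,2} \in E$. In either case a member of $E$ generates a one-dimensional frame, and the one-dimensional characterization delivers $a_1 b_1 < 1$. Since all three alternatives terminate in the same bound, the proof is complete.

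I expect no genuine analytic obstacle here: every inequality is outsourced to Proposition \ref{Gabor2}, Theorem \ref{frame2}, and the one-dimensional frame result. The step most likely to trip one up is purely combinatorial, namely verifying that the two distinguished coordinates $i$ and $k$ cannot both be annihilated by the lone exceptional index $i_0$ — which is exactly the content of the assumption $i \neq k$, and is the reason two members of $E$ sitting in \emph{different} tensor slots are needed rather than one.
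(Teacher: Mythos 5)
Your proof is correct and follows exactly the route the paper intends: the paper gives no written proof beyond the remark that the corollary follows from Proposition \ref{Gabor2} and the known one-dimensional characterization of frames generated by members of $E$, and your argument is precisely the fleshed-out version of that sketch, including the key observation that the hypothesis $i \neq k$ guarantees at least one of the two distinguished coordinates survives the exceptional index in the third alternative of Proposition \ref{Gabor2}(i).
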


Corollary \ref{2sequences} (i) also implies the following corollary, in which the rank of the window function is not necessarily finite.

\begin{cor}\label{density2} Let $d>2$. For each $j \in \{1,2 \}$, let $g_{j,1}, g_{j,2}\in L^2(\mathbb{R})$ be two independent functions  such that $g_{1,1}, g_{2,2} \in E$ or $g_{1,2}, g_{2,1} \in E$. Let $g_{1}, g_{2}\in L^2(\mathbb{R}^{d-2})$ be two independent functions.  If  $\mathcal{G}(g, a_1, b_1 )$ is a frame, then $ a_1b_1<1$, where $$g=  g_{1,1}\otimes g_{2,1}\otimes g_1  +g_{1,2}\otimes g_{2,2}\otimes g_2.$$
\end{cor}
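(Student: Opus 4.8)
The plan is to regard $L^2(\mathbb{R}^d)$ as the three-fold tensor product $L^2(\mathbb{R}) \otimes L^2(\mathbb{R}) \otimes L^2(\mathbb{R}^{d-2})$ --- legitimate precisely because $d > 2$ --- and to invoke Corollary \ref{2sequences} (i) with three tensor factors. First I would observe that, under rectangular sampling, the Gabor system of a tensor product is the tensor product of the corresponding Gabor systems, so that
\[
\mathcal{G}(g, a_1, b_1) = \mathcal{F}_1 + \mathcal{F}_2, \qquad \mathcal{F}_k = \mathcal{G}(g_{1,k}, a_1, b_1) \otimes \mathcal{G}(g_{2,k}, a_1, b_1) \otimes \mathcal{G}(g_k, a_1, b_1),
\]
with the first two factors in $L^2(\mathbb{R})$ and the third in $L^2(\mathbb{R}^{d-2})$. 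The independence hypotheses of Corollary \ref{2sequences} reduce to independence of the generators --- two Gabor systems $\mathcal{G}(h, a_1, b_1)$ and $\mathcal{G}(h', a_1, b_1)$ form independent sequences exactly when $h$ and $h'$ are independent, as one sees by reading off the zero time-frequency index --- and these hold by assumption in each of the three factors. Here the third factor enters only as a pair of independent vectors $g_1, g_2 \in L^2(\mathbb{R}^{d-2})$ that are never decomposed, which is why the rank of $g$ need not be finite.

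Next I would apply Corollary \ref{2sequences} (i) to the frame $\mathcal{F} = \mathcal{G}(g, a_1, b_1)$. This produces three alternatives: $\mathcal{F}_1$ is a frame, $\mathcal{F}_2$ is a frame, or there is an index $i \in \{1,2,3\}$ such that for both $k = 1, 2$ every factor other than the $i$-th generates a frame. In the first two alternatives Theorem \ref{frame2} forces each tensor factor of the relevant $\mathcal{F}_k$ to be a frame; in the third the surviving factors are frames by hypothesis. The goal in every branch is to exhibit a factor index $j \in \{1,2\}$ and a term index $k$ with $g_{j,k} \in E$ and $\mathcal{G}(g_{j,k}, a_1, b_1)$ a frame in $L^2(\mathbb{R})$, for then the quoted one-dimensional characterization ($\mathcal{G}(v,a_1,b_1)$ with $v \in E$ is a frame if and only if $a_1 b_1 < 1$) yields $a_1 b_1 < 1$.

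The crux is that the diagonal placement of the two $E$-functions always meets the positions known to be frames. Recording positions as pairs $(j,k)$ (factor, term), the hypothesis puts the two $E$-functions either at $(1,1),(2,2)$ or at $(1,2),(2,1)$, that is, in distinct factors and distinct terms. If $\mathcal{F}_k$ is a frame then all term-$k$ factors are frames, and each diagonal pattern carries one $E$-function in term $k$, so a frame $E$-function is available. If instead the third alternative holds with removed index $i$, then at most one factor is discarded while both $E$-functions sit in factors $1$ and $2$, so at least one of them survives as a frame. This is exactly where the main difficulty lies: the ``distinct factor, distinct term'' structure is what simultaneously defeats the two ways the disjunction might otherwise avoid the $E$-functions --- had the two $E$-functions shared a factor, the third alternative removing that factor would leave none among the frames, and had they shared a term, the opposite-term frame alternative would fail. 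Once a frame $\mathcal{G}(v, a_1, b_1)$ with $v \in E$ is produced, the quoted equivalence completes the argument.
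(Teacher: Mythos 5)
Your proof is correct and takes exactly the route the paper intends: the paper derives this corollary from Corollary \ref{2sequences}(i) applied to the three-fold factorization $L^2(\mathbb{R})\otimes L^2(\mathbb{R})\otimes L^2(\mathbb{R}^{d-2})$, combined with Theorem \ref{frame2} and the quoted characterization of one-dimensional Gabor frames with windows in $E$. Your trichotomy analysis, exploiting the diagonal (distinct factor, distinct term) placement of the two $E$-functions, is precisely the argument the paper leaves implicit when it says the corollary follows from Corollary \ref{2sequences}(i).
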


\begin{examp} Let $g_{1}, g_{2}\in L^2(\mathbb{R}^{d-2})$ be two independent functions. let $u,v \in L^2(\mathbb{R})$ such that $u$ and $1/\cosh(\pi \cdot)$
are independent and $v$ and $1/(1+4\pi^2 (\cdot)^2)$ are independent. The following function satisfies the hypotheses of Corollary \ref{density2}.
\begin{eqnarray*}
g(x_1,\dots , x_d) = \frac{u(x_2) g_1(x_3,\dots , x_d ) }{1+4\pi^2 x_1^2}+ \frac{v(x_1) g_2(x_3,\dots , x_d)}{\cosh(\pi x_2)}.
\end{eqnarray*}
\end{examp}

\end{document}